\theoremstyle{plain}
\newcommand{\mcal}{\mathcal}
\newcommand{\mbb}{\mathbb}
\newcommand{\mrm}{\mathrm}
\newtheorem{theorem}{Theorem}[section]
\newtheorem{corollary}[theorem]{Corollary}
\newtheorem{lemma}[theorem]{Lemma}
\newtheorem{question}[theorem]{Question}
\newtheorem{proposition}[theorem]{Proposition}
\newtheorem{conjecture}[theorem]{Conjecture}
\theoremstyle{definition}
\newtheorem{definition}[theorem]{Definition}
\newtheorem{remark}[theorem]{Remark}
\newtheorem{example}[theorem]{Example}
\newtheorem*{acknowledgements}{Acknowledgements}
\title{Non-existence of certain Galois representations
with a uniform tame inertia weight}
\author{Yoshiyasu Ozeki\footnote{Supported by the JSPS Fellowships for 
Young Scientists.
\newline \quad \ \ e-mail: \texttt{y-ozeki@math.kyushu-u.ac.jp}}}
\date{}
\begin{document}
\maketitle

\begin{abstract}
In this paper, 
we prove the
non-existence of certain semistable Galois representations of 
a number field.
Our consequence can be applied to some geometric 
problems.
For example, we prove a special case of 
a Conjecture of Rasmussen and Tamagawa,
related with the finiteness of the set of isomorphism classes of 
abelian varieties with constrained prime power torsion.
\end{abstract}

\setcounter{section}{-1}
\section{Introduction}

Let $\ell$ be a prime number
and $K$ a number field.
In this paper, 
we show the non-existence of certain semistable 
$\ell$-adic Galois representations of the absolute Galois group
$G_K$ of $K$
by using remarkable results on the tame inertia weights
due to Caruso.
Fix non-negative integers $n,r$ and $w$, 
and a prime number $\ell_0\not =\ell$.
Put
$\bullet:=(n,\ell_0,r,w)$.
We consider the set $\mrm{Rep}_{\mbb{Q}_{\ell}}
(G_K)^{\bullet}$ of isomorphism classes of 
$\ell$-adic representations of $G_K$
(Definition \ref{Def1} (2)).
This set is related with 
the dual of
$H^w_{\mrm{\acute{e}t}}(X_{\bar K},\mbb{Q}_{\ell})$,
where  $X$ is a proper smooth scheme over $K$
which has everywhere semistable reduction
and has good reduction at a place of $K$ above $\ell_0$.
Our main result in this paper is 

\begin{theorem}
[= Theorem \ref{Cor2}]
\label{Intro}
Suppose that $w$ is odd or $w> 2r$.
Then there exists an explicit constant $C$ depending only on 
$K,n,\ell_0,r$ and $w$ such that 
$\mrm{Rep}_{\mbb{Q}_{\ell}}
(G_K)^{\bullet}$
is empty for any prime number $\ell>C$
which does not split in $K$.
\end{theorem}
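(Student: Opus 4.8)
The plan is to suppose, for contradiction, that $\mrm{Rep}_{\mbb{Q}_{\ell}}(G_K)^{\bullet}$ contains an $n$-dimensional representation $\rho$ for some prime $\ell>C$ which does not split in $K$, and to confront the tame inertia of $\rho$ at $\ell$ --- where Caruso's theorem applies --- with the action of Frobenius at $\ell_0$. Let $\bar\rho$ be the semisimplification of the mod-$\ell$ reduction of $\rho$. By the definition of $\mrm{Rep}^{\bullet}$ (Definition~\ref{Def1}), the image of $\bar\rho$ on $G_{K(\mu_{\ell})}$ is an $\ell$-group; since $\ell-1=[G_K:G_{K(\mu_{\ell})}]$ is prime to $\ell$, Clifford's theorem shows $\bar\rho|_{G_{K(\mu_{\ell})}}$ is semisimple, hence, its image being unipotent, trivial. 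Therefore $\bar\rho\cong\bigoplus_{i=1}^{n}\bar\chi_{\ell}^{a_i}$ for integers $a_i$, a priori defined only modulo $[K(\mu_{\ell}):K]$, where $\bar\chi_{\ell}$ is the mod-$\ell$ cyclotomic character. Now enlarge $C$ so that $\ell>C$ forces $\ell$ to be unramified in $K$ and $\ell-1>r$, and fix a place $v\mid\ell$; here the hypothesis that $\ell$ does not split in $K$ is invoked to guarantee that $v$ has the local structure (an unramified extension of $\mbb{Q}_{\ell}$ with the appropriate residue degree) under which Caruso's bound on tame inertia weights is available. As $\rho$ is semistable at $v$ with Hodge--Tate weights in $\{0,\dots,r\}$, that bound yields that the tame inertia weights of $\bar\rho|_{I_v}$ lie in $\{0,\dots,r\}$; since $\bar\rho|_{I_v}\cong\bigoplus\bar\chi_{\ell}^{a_i}|_{I_v}$, a comparison with the tame inertia weights of the $\bar\chi_{\ell}^{a_i}|_{I_v}$ shows that each $a_i$, taken in $\{0,\dots,\ell-2\}$, in fact lies in $\{0,1,\dots,r\}$.

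Next I bring in the good reduction at $\ell_0$. Let $v_0\mid\ell_0$ be a place at which $\rho$ has good reduction, put $q_0=\mrm{Nm}(v_0)$ (so $q_0\le\ell_0^{[K:\mbb{Q}]}$), and let $\sigma_{v_0}$ be a Frobenius at $v_0$. By the definition of $\mrm{Rep}^{\bullet}$ --- good reduction at $\ell_0$ together with purity of weight $w$, as for the dual of $H^w_{\aet}$ --- the characteristic polynomial $P(X)=\det\!\big(X-\rho(\sigma_{v_0})\big)$ lies in $\mbb{Z}[X]$, is monic of degree $n$, and its roots $\alpha_1,\dots,\alpha_n$ satisfy $|\iota(\alpha_j)|=q_0^{w/2}$ for every embedding $\iota\colon\overline{\mbb{Q}}\hookrightarrow\mbb{C}$; hence each coefficient of $P$, being up to sign an elementary symmetric function of the $\alpha_j$, has absolute value at most $2^{n}q_0^{nw/2}$. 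Since $\bar\chi_{\ell}(\sigma_{v_0})\equiv q_0\pmod{\ell}$, the first paragraph gives $P(X)\equiv Q(X)\pmod{\ell}$, where $Q(X):=\prod_{i=1}^{n}\big(X-q_0^{a_i}\big)$ is monic of degree $n$ in $\mbb{Z}[X]$ with coefficients of absolute value at most $2^{n}q_0^{nr}$ (because $0\le a_i\le r$). Thus $P-Q\in\mbb{Z}[X]$ has degree $<n$, its coefficients are all divisible by $\ell$, and they are bounded in absolute value by $2^{n+1}q_0^{\,n\max(w/2,\,r)}$ --- an explicit quantity depending only on $K,n,\ell_0,r,w$. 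Enlarging $C$ past it, we obtain $P=Q$ whenever $\ell>C$.

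Finally, $P=Q$ means $\{\alpha_1,\dots,\alpha_n\}=\{q_0^{a_1},\dots,q_0^{a_n}\}$ as multisets, so each $\alpha_j$ equals some $q_0^{a_i}$, a positive real number; then $q_0^{w/2}=|\alpha_j|=q_0^{a_i}$, which forces $w=2a_i$, so that $w$ is even and $w\le 2r$. This contradicts the hypothesis that $w$ is odd or $w>2r$, and therefore no such $\rho$ exists: $\mrm{Rep}_{\mbb{Q}_{\ell}}(G_K)^{\bullet}=\emptyset$ for every prime $\ell>C$ not splitting in $K$, with $C$ the maximum of the finitely many explicit bounds gathered above. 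The step I expect to be the main obstacle is the first one: invoking Caruso's theorem with the correct normalisation of tame inertia weights, checking that the non-splitting of $\ell$ genuinely supplies its local hypotheses at $\ell$, and --- most delicately --- verifying that the tame inertia exponents $a_i$ extracted at $\ell$ are the very same integers that govern the Frobenius eigenvalues at $\ell_0$, so that the local data at the two primes may legitimately be compared modulo $\ell$.
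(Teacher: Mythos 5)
Your overall architecture is the right one --- Caruso's bound on tame inertia weights at a place above $\ell$, confronted with the Weil-number estimates at $\ell_0$ via elementary symmetric functions reduced mod $\ell$ (this is exactly the paper's Lemma \ref{Lem0}), ending in the contradiction $w=2a_i$ with $0\le a_i\le r$. But there is a genuine gap at your very first step, and it is the step on which the whole theorem turns. You assert that, ``by the definition of $\mathrm{Rep}^{\bullet}$,'' the image of $\bar\rho$ on $G_{K(\mu_{\ell})}$ is an $\ell$-group, and conclude $\bar\rho^{\mathrm{ss}}\cong\bigoplus_i\bar\chi_{\ell}^{a_i}$. Definition \ref{Def1}(2) says no such thing: membership in $\mathrm{Rep}_{\mathbb{Q}_{\ell}}(G_K)^{\bullet}$ requires only (F-1), the existence of a full flag of $G_K$-submodules of $\bar V$, so the graded pieces are a priori arbitrary characters $\psi_k\colon G_K\to\mathbb{F}_{\ell}^{\times}$. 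The property you assume is condition (F-2), which cuts out the strictly smaller set $\mathrm{Rep}^{\bullet}_{\mathrm{cycl}}$; as written, your argument proves Theorem \ref{Cor1}, not Theorem \ref{Cor2}.

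The paper's Proposition \ref{Thm2} exists precisely to bridge this gap, and it is there that the hypothesis ``$\ell$ does not split in $K$'' is really used: by (G-3) each $\psi_k$ is unramified away from $\ell$; by Theorem \ref{Ca}, $\psi_k=\theta_{1,\lambda}^{b_{k,\lambda}}$ on $I_{\lambda}$; because there is a \emph{unique} place of $K$ above $\ell$, the character $\psi_k^{D}\chi_{\ell}^{-b'_kD}$ is unramified at \emph{every} finite place; and class field theory then forces $\psi_k^{Dh_K^{+}}=\chi_{\ell}^{b'_kDh_K^{+}}$ on all of $G_K$, which is why the narrow class number $h_K^{+}$ appears in the constants $C'_1,C'_2$. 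Your proposal instead invokes non-splitting only to ``guarantee the local structure under which Caruso's bound is available,'' but Caruso's theorem applies at every place above $\ell$ irrespective of splitting, so in your argument the hypothesis does no work. That is fatal rather than cosmetic: the remark following Theorem \ref{Cor2} (CM elliptic curves, with $\ell$ split in the CM field) shows $\mathrm{Rep}_{\mathbb{Q}_{\ell}}(G_K)^{\bullet}$ is nonempty for infinitely many split $\ell$, and in that example the graded characters $\psi_k$ are exactly \emph{not} powers of $\chi_{\ell}$. So the decomposition you start from must be derived, not assumed, and the derivation is where the real content of the theorem lies.
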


\noindent
Theorem \ref{Intro} comes from a relation between 
the tame inertia weights and eigenvalues of Frobenius
action (Proposition \ref{Thm2}).
As a by-product of the above theorem,
we obtain some approaches to algebraic geometry. 
For example, 
our result gives an application to 
a special case of the 
Rasmussen-Tamagawa conjecture 
(\cite{RT})
related with the finiteness of the set of 
isomorphism classes of
abelian varieties with constrained prime power torsion.

Now we describe an organization of this paper.
In Section 1, we recall some results 
on integral $p$-adic Hodge theory
given by Caruso \cite{Ca}. 
In Section 2, we give explicit
values of the tame inertia weights for
certain semistable
Galois representations and prove 
our non-existence theorem. 
In Section 3, we apply our consequence
for some geometric problems.

\begin{acknowledgements}
The author wish to thank Shin Hattori
for bringing the work of Xavier Caruso.
The author would like to express his sincere gratitude to 
Akio Tamagawa and Seidai Yasuda who pointed out the mistake of 
the previous version of the proof for 
the main theorem and gave him useful advise.
\end{acknowledgements}

\vspace{2mm}

\noindent
{\bf Notation:}

\noindent
For a prime number $\ell$ and a topological group $G$, 
an {\it $\ell$-adic representation of $G$} 
(resp.\ {\it $\mbb{F}_{\ell}$}-representation) is a 
finite-dimensional $\mbb{Q}_{\ell}$-vector space $V$
(resp.\ $\mbb{F}_{\ell}$-vector space $V$) 
equipped with a continuous and linear $G$-action.
For such a representation $V$, 
we denote by $V^{\vee}$ the dual of $V$, that is, 
$V^{\vee}:=\mrm{Hom}_{\mbb{Q}_{\ell}}(V,\mbb{Q}_{\ell})$
(resp.\ $V^{\vee}:=\mrm{Hom}_{\mbb{F}_{\ell}}(V,\mbb{F}_{\ell})$)
with the $G$-action defined by 
$g.f(v):=f(g^{-1}.v)$ for $f\in V^{\vee}, g\in G$ and $v\in V$. 
For any scheme $X$ over a commutative ring $R$
and an $R$-algebra $R'$, 
we denote the fiber product $X\times_{\mrm{Spec}(R)} \mrm{Spec}(R')$ 
by $X_{R'}$.

\section{Tame inertia weights of semistable representations}

In this section, we recall the definition 
of the tame inertia weights (cf.\ \cite{Se}, Section 1)
and Caruso's work for the tame inertia weights of 
a residual representation of semistable 
Galois representations (cf.\ \cite{Ca}). 
Let  $K_{\lambda}$  be a complete discrete valuation field of characteristic zero 
with perfect residue field $k$ of positive characteristic $\ell$
and $G_{K_{\lambda}}$ its absolute Galois groups.
Let $e$ be the absolute ramification
index of $K_{\lambda}$.
The tame inertia weights of an $\ell$-adic semistable Galois representation 
of $G_{K_{\lambda}}$ with Hodge-Tate weights in $[0,r]$
have remarkable properties if $er<\ell-1$.
For example,
Serre conjectured in \cite{Se} that 
the tame inertia weights 
on the Jordan-H\"older quotients of a residual representation of the
$r$-th $\ell$-adic \'etale cohomology group 
$H^r_{\mrm{\acute{e}t}}(X_{\bar K_{\lambda}},\mbb{Q}_{\ell})$ of a 
proper smooth scheme $X$ over $K_{\lambda}$ 
are  between $0$ and $er$.
Caruso proved this Serre's conjecture in \cite{Ca}
by using the integral $p$-adic Hodge theory.   
As an another example, in \cite{CS},
Caruso and Savitt proposed the tame inertia polygon of  
an $\ell$-adic semistable Galois representation of $G_{K_{\lambda}}$, and
they showed that  
this polygon has good relations with 
the Hodge polygon and the Newton polygon introduced in \cite{Fo}.

\subsection{Tame inertia weights}

We denote by  
$I_{\lambda}$ the inertia subgroup of $G_{K_{\lambda}}$, 
$I_w$ its wild inertia subgroup 
and $I_t:=I_{\lambda}/I_w$ the tame inertia group.  
Let $V$ be an $h$-dimensional 
irreducible $\mbb{F}_{\ell}$-representation of $I_{\lambda}$
and fix a separable closure $\bar{\mbb{F}}_{\ell}$ of $\mbb{F}_{\ell}$. 
By the irreducibility, 
the action of $I_{\lambda}$ on $V$ factors through $I_t$ and thus 
we can regard $V$ as a representation of $I_t$. 
Applying Schur's lemma, we see that 
$\mbb{E}:=\mrm{End}_{I_t}(V)$ is the finite field of order ${\ell}^h$. 
Moreover, 
the representation $V$ inherits a structure of a $1$-dimensional 
$\mbb{E}$-representation of $I_t$ by the natural manner. 
This representation is given by a character 
$\rho\colon I_t\to \mbb{E}^{\times}$.
Choose any isomorphism 
$f\colon \mbb{E}\to \mbb{F}_{{\ell}^h}$ and consider the composition
$\rho_f\colon I_t\overset{\rho}{\to} 
\mbb{E}^{\times}\overset{f}{\to}\mbb{F}_{{\ell}^h}^{\times}$:

\vspace{-3mm}

\begin{center}
$\displaystyle \xymatrix{
I_t \ar_{\rho}[rr] \ar[drr] \ar^{\rho_f}@/^5mm/[rrr]
& & \mbb{E}^{\times} \ar@{_{(}->}[d] \ar_{f}^{\simeq}[r]& 
\mbb{F}_{{\ell}^h}^{\times}.\\
   & & GL(V)=\mrm{End}_{\mbb{F}_{\ell}}(V)^{\times} 
&  }$
\end{center}

\vspace{-3mm}

Denote by $\mu_{{{\ell}^h}-1}(\bar K_{\lambda})$ the set of 
$({{\ell}^h}-1)$-st roots 
of unity in a separable closure $\bar K_{\lambda}$ of $K_{\lambda}$.
Consider an isomorphism $\mu_{{{\ell}^h}-1}(\bar K_{\lambda})
\simeq \mbb{F}_{{\ell}^h}^{\times}$
coming from a surjection
$\mcal{O}_{\bar K_{\lambda}}\to \bar{\mbb{F}}_{\ell}$,
where $\mcal{O}_{\bar K_{\lambda}}$ is the integer ring of $\bar K_{\lambda}$,
and take the following fundamental character of level $h$:

\vspace{-5mm}

\begin{align*}
\theta_h\colon & I_t\to \mu_{q-1}(\bar K_{\lambda})\simeq 
\mbb{F}_{{\ell}^h}^{\times}.\\
& \sigma\mapsto \dfrac{\eta^{\sigma}}{\eta} 
\end{align*}

\vspace{-3mm}

\noindent
Here $\eta$ is a (${{\ell}^h}-1$)-st root of a uniformizer of $K_{\lambda}$.
It is easy to check that 
$\theta_h^{1+\ell+\cdots +{\ell}^{h-1}}=\theta_1$, 
$\theta_h^{{{\ell}^h}-1}=1$ 
and,
with respect to $h$ embeddings
$\mbb{F}_{{\ell}^h}\hookrightarrow \bar{\mbb{F}}_{\ell}$,
all the fundamental characters are given by 
$\theta_{h,0}(:=\theta_h),\theta_{h,1},
\theta_{h,2}, \dots ,\theta_{h,h-1}$, where
$\theta_{h,i}=\theta_{h,i-1}^{\ell}$ for $0\le i\le h-1$ and 
$\theta_{h,0}=\theta_{h,h-1}^{\ell}$.
It is known that $\theta_1^{e}$ coincides with the 
mod $\ell$ cyclotomic character
(\cite{Se}, Section 1.8, Proposition 8).
Since $I_t$ is pro-cyclic and 
$\mrm{Im}(\theta_h)=\mbb{F}_{{\ell}^h}^{\times}$, 
there exists an integer $n_f\in \{0,1,\dots , {{\ell}^h}-2\}$
such that $\rho_f=\theta_h^{n_f}$.
If we decompose 
$n_f=n_0+n_1\ell+n_2\ell^2+\dots +n_{h-1}\ell^{h-1}$ with 
integers $0\le n_i\le \ell-1$ 
for any $i$, then we can see that the set 
$\{n_0,n_1,n_2,\dots ,n_{h-1}\}$ is independent of the choice of 
$f$. 
\begin{definition} 
We call these numbers $n_0,n_1,n_2,\dots ,n_{h-1}$ 
the \textit{tame inertia weights of $V$}. 
In general, for any  $\mbb{F}_{\ell}$-representation $V$ of $I_{\lambda}$,
the tame inertia weights of $V$ are the numbers of the 
tame inertia weights of 
all the Jordan-H\"older quotients of $V$.
\end{definition} 

\begin{example}
Suppose that $k$ is algebraically closed.
Let $E$ be an elliptic curve over $K_{\lambda}$ with 
semistable reduction.
If $E$ has supersingular reduction,
assume $e=1$.
Then the tame inertia weights 
of $E[\ell]$ are $0$ and $e$
(cf.\ \cite{Se},\ Section 1,\ Proposition\ 11 and 12).
\end{example}

\begin{definition} 
\label{TIW}
Let $V$ be an $\ell$-adic representation of $G_{K_{\lambda}}$.
The tame inertia weights of $V$ is the tame inertia 
weights of a residual representation of $V|_{I_{\lambda}}$. 
\end{definition} 

The above definition is independent of the choice of a residual representation
of $V$ by the Brauer-Nesbitt theorem.

\begin{definition}
\label{UTIW}
Let $w$ be an integer with $0\le w<\ell-1$
and $V$ be an $n$-dimensional  
$\ell$-adic representation of $G_{K_{\lambda}}$.
Denote by $w_1\le w_2\le \cdots \le w_n$ 
all the tame inertia weights of $V$.
We say that $V$ is {\it of uniform tame inertia weight $w$} if 
$w_1=w_2=\cdots =w_n=w$.

\end{definition}

\subsection{Caruso's Result} 
Fix an integer $r\ge 0$
such that $er<\ell-1$.
We use the ring $S$ and the category 
$\mrm{Mod}_{/S_{\infty}}^{r,\Phi,N}$ 
of finite torsion $S$-modules equipped with some additional structures
as in Section 1 of \cite{Ca}
without giving the precise definitions. 
The category $\mrm{Mod}_{/S_{\infty}}^{r,\Phi,N}$
is just the category \underbar{$\mcal{M}$}$^r$ 
given in {\it op.cit.}
The category $\mrm{Mod}_{/S_{\infty}}^{r,\Phi,N}$ is an 
abelian category (cf.\ \cite{Ca}, Section 3.5).
We denote by $\mrm{Rep}_{\mbb{Z}_{\ell}}^{\mrm{st}}(G_{K_{\lambda}})^r$ 
(resp.\ $\mrm{Rep}_{\mbb{Z}_{\ell}}(G_{K_{\lambda}})_{\mrm{tors}}$) 
the category of $G_{K_{\lambda}}$-stable $\mbb{Z}_{\ell}$-lattices of  
semistable 
$\ell$-adic representations of $G_{K_{\lambda}}$ with 
Hodge-Tate weights in $[0,r]$ 
(resp.\ the category of finite torsion $\mbb{Z}_{\ell}$-modules 
with a continuous $G_{K_{\lambda}}$-action).
Denote by 
$\mrm{Mod}_{/S}^{r,\Phi,N}$ the category of 
strongly divisible modules over $S$
of weight $r$
(cf.\ \cite{Ca},\ Section\ 7.1).
There exist the following two contravariant functors 
\[
T_{\mathrm{st}}:
\mrm{Mod}_{/S}^{r,\Phi,N}
\to \mathrm{Rep}_{\mathbb{Z}_{\ell}}^{\mathrm{st}}
(G_{K_{\lambda}})^r
\]
\noindent
and 
\[
T_{\mrm{st}}\colon 
\mrm{Mod}_{/S_{\infty}}^{r,\Phi,N}\to 
\mrm{Rep}_{\mbb{Z}_{\ell}}(G_{K_{\lambda}})_{\mrm{tors}}
\]

\noindent
satisfying good properties. For example,

\vspace{2mm}

\noindent
(1) (cf.\ \cite{Ca}, Theorem 1.0.5) The 1st $T_{\mrm{st}}$ is an isomorphism,

\vspace{2mm}

\noindent
(2) (cf.\ \cite{Ca}, Theorem 1.0.4) The 2nd $T_{\mrm{st}}$ is exact 
and fully faithful, and 
its essential image is stable under taking sub-objects and quotient objects.

%

\vspace{2mm}

\noindent
If $\mcal{M}\in \mrm{Mod}_{/S_{\infty}}^{r,\Phi,N}$ is isomorphic to 
$S/{\ell}^{n_1}S\oplus S/{\ell}^{n_2}S\oplus\cdots \oplus S/{\ell}^{n_d}S$ 
as $S$-modules, then 
$T_{\mrm{st}}(\mcal{M})$ is isomorphic to 
$\mbb{Z}_{\ell}/{\ell}^{n_1}\mbb{Z}_{\ell}\oplus \mbb{Z}_{\ell}/{\ell}^{n_2}\mbb{Z}_{\ell}
\oplus \cdots \oplus\mbb{Z}_{\ell}/{\ell}^{n_d}\mbb{Z}_{\ell}$ as $\mbb{Z}_{\ell}$-modules
(\cite{Ca}, Proposition 6.4.5).
By the definition of strongly divisible modules, we see that, 
for any strongly divisible module $\tilde{\mcal{M}}$ and $n\ge 0$,
the quotient $\tilde{\mcal{M}}/{\ell}^n\tilde{\mcal{M}}$ is 
an object of
$\mrm{Mod}_{/S_{\infty}}^{r,\Phi,N}$
and  
the following diagram is commutative:
\begin{center}
$\displaystyle \xymatrix{
\mrm{Mod}_{/S}^{r,\Phi,N}
 \ar[rr]^{T_{\mrm{st}}} \ar[d]^{\mrm{mod}\ {\ell}^n}& & 
\mrm{Rep}_{\mbb{Z}_{\ell}}^{\mrm{st}}(G_{K_{\lambda}})^r \ar[d]^{\mrm{mod}\ {\ell}^n}\\
\mrm{Mod}_{/S_{\infty}}^{r,\Phi,N} \ar[rr]^{T_{\mrm{st}}} & & 
\mrm{Rep}_{\mbb{Z}_{\ell}}(G_{K_{\lambda}})_{\mrm{tors}}. }$
\end{center}
If $k$ is algebraically closed and 
$\mcal{M}\in \mrm{Mod}_{/S_{\infty}}^{r,\Phi,N}$ is a simple object,
then $T_{\mrm{st}}(\mcal{M})$ is an irreducible $\mbb{F}_{\ell}$-representation
of $G_{K_{\lambda}}$ and its tame inertia weights are between
$0$ and $er$ (\cite{Ca}, Theorem 1.0.3). 
By using the above facts, we can show the following
important theorem:

\begin{theorem}[\cite{Ca}]
\label{Ca}
Let $T_{\ell}\in \mrm{Rep}_{\mbb{Z}_{\ell}}^{\mrm{st}}(G_{K_{\lambda}})^r$ 
and $\bar{T}_{\ell}=T_{\ell}/{\ell}T_{\ell}$ its residual representation.
Then the tame inertia weights 
of $\bar{T}_{\ell}|_{I_{\lambda}}$ are 
between $0$ and $er$.
\end{theorem}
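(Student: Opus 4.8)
The plan is to chain Caruso's structural results on the functor $T_{\mrm{st}}$, after first arranging that the residue field is algebraically closed. Since the tame inertia weights of $\bar T_\ell|_{I_\lambda}$ depend only on the $I_\lambda$-action, I would replace $K_\lambda$ by the completion $\hat K_\lambda^{\mrm{ur}}$ of its maximal unramified extension: this does not change the absolute ramification index $e$ (so the standing hypothesis $er<\ell-1$ persists), it makes the residue field an algebraic closure of $k$, and it canonically identifies $G_{\hat K_\lambda^{\mrm{ur}}}$ with $I_\lambda$. As semistability and the Hodge--Tate weights of an $\ell$-adic representation are preserved under unramified base change, $T_\ell|_{G_{\hat K_\lambda^{\mrm{ur}}}}$ still lies in $\mrm{Rep}_{\mbb{Z}_{\ell}}^{\mrm{st}}(G_{\hat K_\lambda^{\mrm{ur}}})^r$ and has residual representation $\bar T_\ell|_{I_\lambda}$. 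Thus we may assume $k$ is algebraically closed.

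Next I would transport everything into Caruso's module categories. Since the first $T_{\mrm{st}}$ is an equivalence, write $T_\ell\simeq T_{\mrm{st}}(\tilde{\mcal{M}})$ for a strongly divisible module $\tilde{\mcal{M}}\in \mrm{Mod}_{/S}^{r,\Phi,N}$; then $\mcal{M}:=\tilde{\mcal{M}}/\ell\tilde{\mcal{M}}$ is an object of $\mrm{Mod}_{/S_{\infty}}^{r,\Phi,N}$ and, by the commutative diagram relating the two functors under reduction mod $\ell$, one has $\bar T_\ell=T_\ell/\ell T_\ell\simeq T_{\mrm{st}}(\mcal{M})$. Because $\mrm{Mod}_{/S_{\infty}}^{r,\Phi,N}$ is abelian and $\mcal{M}$ is of finite length (its image $\bar T_\ell$ is a finite-dimensional $\mbb{F}_\ell$-vector space, and $T_{\mrm{st}}$ is fully faithful with essential image closed under sub- and quotient objects), I would choose a composition series $0=\mcal{M}_0\subsetneq\cdots\subsetneq\mcal{M}_m=\mcal{M}$ with simple subquotients $\mcal{M}_j/\mcal{M}_{j-1}$. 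Applying the second $T_{\mrm{st}}$, which is contravariant, exact and fully faithful with essential image stable under subobjects and quotients, turns each inclusion $\mcal{M}_{j-1}\hookrightarrow\mcal{M}_j$ into a surjection $T_{\mrm{st}}(\mcal{M}_j)\twoheadrightarrow T_{\mrm{st}}(\mcal{M}_{j-1})$ with kernel $T_{\mrm{st}}(\mcal{M}_j/\mcal{M}_{j-1})$, hence produces a finite filtration of $\bar T_\ell$ by $G_{K_\lambda}$-subrepresentations whose graded pieces are the $T_{\mrm{st}}(\mcal{M}_j/\mcal{M}_{j-1})$; by the consequence of \cite{Ca}, Theorem 1.0.3 recalled just before the statement, each such piece is an irreducible $\mbb{F}_\ell$-representation of $G_{K_\lambda}$ whose tame inertia weights lie in $[0,er]$.

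To finish, I would use that the multiset of tame inertia weights is additive along short exact sequences of $\mbb{F}_\ell$-representations of $I_\lambda$: by the Jordan--H\"older theorem for finite-dimensional $\mbb{F}_\ell$-representations of $I_\lambda$, the Jordan--H\"older quotients of such a representation are, with multiplicity, the union of those of any subobject and the corresponding quotient, and the tame inertia weights were defined precisely through these quotients. Restricting the filtration of $\bar T_\ell$ to $I_\lambda$ and applying this additivity shows that the tame inertia weights of $\bar T_\ell|_{I_\lambda}$ are the union of the tame inertia weights of the $T_{\mrm{st}}(\mcal{M}_j/\mcal{M}_{j-1})|_{I_\lambda}$, hence all lie in $[0,er]$.

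No genuinely new idea is needed; the work is in the bookkeeping. The two points I would be careful about are the reduction to $k$ algebraically closed --- checking that semistability, the Hodge--Tate bound, and the inequality $er<\ell-1$ all survive --- and the abelian-category manipulation in Caruso's torsion category, where the contravariance of $T_{\mrm{st}}$ must be tracked so that composition factors of $\mcal{M}$ really do produce composition factors of $\bar T_\ell$. I expect the latter, together with the passage from composition factors of $\bar T_\ell$ as a $G_{K_\lambda}$-module to the tame inertia weights of its restriction to $I_\lambda$ via additivity, to be the main (if modest) obstacle.
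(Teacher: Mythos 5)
Your proof is correct and takes essentially the same approach as the paper: reduce to the case of an algebraically closed residue field, realize $\bar T_\ell$ as $T_{\mrm{st}}(\tilde{\mcal{M}}/\ell\tilde{\mcal{M}})$ via the commutative diagram, and invoke Caruso's result that $T_{\mrm{st}}$ of a simple object is irreducible with tame inertia weights in $[0,er]$. The only (immaterial) difference is directional: you push a composition series of $\mcal{M}$ forward through the exact contravariant functor, whereas the paper starts from the Jordan--H\"older quotients of $\bar T_\ell$ and uses the stability of the essential image under subquotients to identify each with $T_{\mrm{st}}(\mcal{M}')$ for a simple $\mcal{M}'$.
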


\begin{proof}
We may assume that 
$k$ is algebraically closed.
Choose the strongly divisible module
$\tilde{\mcal{M}}$ corresponding to $T_{\ell}$ via $T_{\mrm{st}}$.
Then $\mcal{M}:=\tilde {\mcal{M}}/{\ell}\tilde{\mcal{M}}$ is contained in
$\mrm{Mod}_{/S_{\infty}}^{r,\Phi,N}$ and  
$T_{\mrm{st}}(\mcal{M})$ is isomorphic to $\bar{T}_{\ell}$. 
We identify $T_{\mrm{st}}(\mcal{M})$ with $\bar{T}_{\ell}$.
Since the essential image of 
$
T_{\mrm{st}}\colon \mrm{Mod}_{/S_{\infty}}^{r,\Phi,N}\to 
\mrm{Rep}_{\mbb{Z}_{\ell}}(G_{K_{\lambda}})_{\mrm{tors}} 
$
is stable under sub-quotient, 
any Jordan-H\"{o}lder quotient of $\bar{T}_{\ell}$ is
isomorphic to the representation
of the form $T_{\mrm{st}}(\mcal{M}')$ for some 
$\mcal{M}' \in \mrm{Mod}_{/S_{\infty}}^{r,\Phi,N}$.
The object $\mcal{M}'$ is simple because 
the functor $T_{\mrm{st}}$ is exact and fully faithful. 
Therefore, we obtain the desired result.
\end{proof}

\begin{remark}
In fact, we do not need the assumption $er<\ell-1$
for Theorem \ref{Ca} (the case $er\ge \ell-1$ is trivial). 
\end{remark}

\section{Non-existence theorems}

In this section, 
we calculate the tame inertia weights of  
$\ell$-adic representations with certain 
geometric and filtration conditions  
for a prime number $\ell$ large enough.
As a result, we show the non-existence theorems 
of certain Galois representations. 

Let $K$ be a finite extension over $\mbb{Q}$ 
and 
fix an algebraic closure $\bar K$ of $K$.
We put $G_K:=\mrm{Gal}(\bar K/K)$, the absolute Galois group of $K$.
Let $\ell$ be a prime number.
For any finite place $v$ of $K$, 
we denote by $G_v$ and $I_v$ the decomposition group and 
its inertia subgroup at $v$, respectively. 
Furthermore, we denote 
by $e_v$ the absolute ramification index at $v$,
$q_v$ the order of the residue field of $v$
and $\mrm{Fr}_v$ the 
arithmetic Frobenius at $v$.
For a place $\lambda$ of $K$ above $\ell$, 
we identify $G_{\lambda}$ with the absolute Galois group $G_{K_{\lambda}}$of
a ${\lambda}$-adic completion $K_{\lambda}$ of $K$
via a fixed embedding $\bar K\hookrightarrow \bar K_{\lambda}$,
where $\bar K_{\lambda}$ is an algebraic closure of $K_{\lambda}$.
\begin{definition}
Let ${\lambda}$ be a place of $K$ above $\ell$
and $V$ an $\ell$-adic representation of $G_K$.
The {\it tame inertia weights of $V$ at ${\lambda}$} is the 
tame inertia weights of $V|_{G_{\lambda}}$
(cf.\ Definition \ref{TIW}).
For an integer $0\le w< \ell-1$,
we say that $V$ is {\it of uniform tame inertia weight $w$ at ${\lambda}$}
if $V|_{G_{\lambda}}$ is of uniform tame inertia weight $w$
(cf.\ Definition \ref{UTIW}).
\end{definition}

\subsection{Geometric and filtration conditions}

We define the set of representations we mainly 
consider throughout this section.
We fix non-negative integers $n,r,w$ and $\bar w$,
and a prime number $\ell_0$ different from $\ell$.
Let $\chi_{\ell}$ be 
the mod $\ell$ cyclotomic character. 
Take an $n$-dimensional $\ell$-adic representation 
$V$ of $G_K$
and denote by $\bar V$ its residual representation.
Now we consider the following 
geometric conditions (G-1), (G-2), (G-2)$'$ and (G-3), and  
filtration 
conditions (F-1) and (F-2):

\vspace{2mm}

\noindent 
(G-1) For any place ${\lambda}$ of $K$ above $\ell$,
the representation
$V|_{G_{\lambda}}$ is semistable 
and has Hodge-Tate weights in $[0,r]$.

\vspace{2mm}

\noindent
(G-2) For some places ${\lambda}_0$ of $K$ above $\ell_0$,
the representation
$V$ is unramified at ${\lambda}_0$ and  
the characteristic polynomial 
$\mrm{det}(T-\mrm{Fr}_{{\lambda}_0}|V)$ 
has rational integer coefficients.
Furthermore, there exists  non-negative integers 
$w_1(V), w_2(V), \dots , w_n(V)$ such that
$w_1(V)+w_2(V)+\cdots +w_n(V)\le \bar w$
and
the roots of the above characteristic polynomial 
have complex absolute values
$q_{{\lambda}_0}^{w_1(V)/2}, q_{{\lambda}_0}^{w_2(V)/2},\dots ,
q_{{\lambda}_0}^{w_n(V)/2}$
for every embedding $\bar{\mbb{Q}}_{\ell}$ into $\mbb{C}$.

\vspace{2mm}

\noindent
(G-2)$'$ The condition (G-2) holds
and  
$w_1(V)=w_2(V)= \cdots = w_n(V)=w$.

\vspace{2mm}

\noindent
(G-3) For any finite place ${\lambda}$ of $K$ not above $\ell$, 
the action of $I_{\lambda}$ on $\bar V$ is unipotent.

\vspace{2mm}

\noindent
(F-1) The representation $\bar V$ 
has a filtration of $G_K$-modules
\[
\{0\}= \bar V_0\subset \bar V_1 \subset \dots \subset
\bar V_{n-1}\subset \bar V_n=\bar V
\]
such that $\bar V_k$ has dimension $k$ for each $1\le k\le n$. 

\vspace{2mm}

\noindent
(F-2) The condition (F-1) holds.
Moreover,  
for each $1\le k\le n$, the $G_K$-action on the quotient 
$\bar V_k/\bar V_{k-1}$ is given by 
$g.\bar v=\chi_{\ell}^{a_{k}}(g)\bar v$ 
for some $0\le a_{k}\le \ell-2$.     

\vspace{2mm}

\noindent
If an $\ell$-adic representation $V$ satisfies the condition (F-1), 
then we say that $V$ is {\it of residually Borel}.
We note that
it is independent of the choice of 
a residual representation $\bar V$ of $V$ whether
the filtration conditions (F-1) and (F-2) 
hold or not. 
If $n=2$, then (F-1)
is equivalent to the condition that 
$\bar V$ is reducible.

\begin{example}
Suppose $w\le r$. 
Let $X$ be a proper smooth scheme over $K$
which has everywhere semistable reduction and has good reduction at some 
places of $K$ above $\ell_0$.
Then  the dual   
$H^w_{\mrm{\acute{e}t}}(X_{\bar K}, \mbb{Q}_{\ell})^{\vee}$ of 
the $w$-th $\ell$-adic \'etale cohomology group of 
$X$ satisfies 
the geometric conditions (G-1), (G-2)$'$ and (G-3). 
\end{example}

\begin{proposition}
\label{var}
Let $X$ be a proper smooth scheme over $K$
and $w$ an odd integer.
Denote by $S_X$ the finite set of prime numbers $p$
such that $X$ has bad reduction at some place of $K$ above $p$. 
Then, there exists a finite extension $L$ of $K$
such that, for any $\ell\notin S_X$,
the $\ell$-adic representation 
$H^w_{\mrm{\acute{e}t}}(X_{\bar L}, \mbb{Q}_{\ell})$ of $G_L$
is semistable at all finite places.
\end{proposition}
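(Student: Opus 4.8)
The plan is to replace the cohomology of $X$ by that of a variety carrying an explicit semistable model, via de Jong's theorem on alterations, and then to transport semistability back to $H^w$ of $X$ itself by a direct-summand argument; the oddness of $w$ plays no role here, it is needed only later when the output is fed into Theorem~\ref{Intro}. After enlarging $K$ I may assume $X$ is geometrically connected over $K$ (otherwise I argue component by component), say $\dim X=d$. For a place $v$ of an auxiliary finite extension $L/K$ lying above a prime $p\notin S_X$, the scheme $X_L$ has good reduction at $v$, so for every $\ell\notin S_X$ the restriction of $H^w_{\mrm{\acute{e}t}}(X_{\bar L},\mbb{Q}_\ell)$ to the decomposition group at $v$ is unramified if $v\nmid\ell$ and crystalline (hence semistable) if $v\mid\ell$, by the $p$-adic comparison theorems; no hypothesis on $L$ is needed for these places. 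Since $\ell\notin S_X$ forces every place of $L$ above $S_X$ to be prime to $\ell$, what remains is to find a single finite $L/K$ such that, for each of the finitely many places $v$ of $L$ above $S_X$ and each $\ell\notin S_X$, the inertia $I_v$ acts unipotently on $H^w_{\mrm{\acute{e}t}}(X_{\bar L},\mbb{Q}_\ell)$.

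For that I would invoke de Jong's alteration theorem to produce a finite extension $L/K$ and a proper surjective generically finite morphism $f\colon X'\to X_L$ with $X'$ smooth projective over $L$ (regular, and $\mrm{char}\,L=0$), such that $X'$ admits a proper flat $\mcal{O}_L$-model $\mcal{X}'$ which is strictly semistable over the localization of $\mcal{O}_L$ at every place above $S_X$; the crucial point is that $L$, $X'$ and $f$ are chosen with no reference to $\ell$. Then, for $\ell\notin S_X$ and $v$ a place of $L$ above $S_X$ (so $v\nmid\ell$), the Rapoport--Zink description of the nearby cycles of a strictly semistable scheme shows that $I_v$ acts on $H^w_{\mrm{\acute{e}t}}(X'_{\bar L},\mbb{Q}_\ell)$ through its tame quotient and unipotently; that is, $H^w_{\mrm{\acute{e}t}}(X'_{\bar L},\mbb{Q}_\ell)$ is semistable at $v$.

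To descend from $X'$ to $X_L$: since $f$ is a morphism of smooth projective $L$-schemes of the same dimension, the Gysin map $f_*$ and the pullback $f^*$ on $H^w_{\mrm{\acute{e}t}}(-,\mbb{Q}_\ell)$ are $G_L$-equivariant and satisfy $f_*\circ f^*=(\deg f)\cdot\mrm{id}$ with $\deg f\in\mbb{Z}_{>0}$, hence a unit in $\mbb{Q}_\ell$. Thus $f^*$ realizes $H^w_{\mrm{\acute{e}t}}(X_{\bar L},\mbb{Q}_\ell)$ as a $\mbb{Q}_\ell[G_L]$-direct summand of $H^w_{\mrm{\acute{e}t}}(X'_{\bar L},\mbb{Q}_\ell)$, hence as a subrepresentation of it for the decomposition group at every finite place $v$. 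Unipotence of inertia, as well as unramifiedness and crystallinity, all pass to subrepresentations, so combining the two groups of places above gives that $H^w_{\mrm{\acute{e}t}}(X_{\bar L},\mbb{Q}_\ell)$ is semistable at every finite place of $L$, for all $\ell\notin S_X$.

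The hard part is the first step: securing, uniformly in $\ell$, a single $L$ and an alteration of $X_L$ that is semistable at all of the bad places at once. I would handle this by iterating de Jong's theorem with alterations that are isomorphisms away from one chosen place at a time, so that semistability already obtained at the other bad places survives, or by appealing to a global form of the theorem; everything afterwards --- the comparison isomorphisms, unipotence of monodromy in the semistable case, and the trace formalism --- is routine. (When $w=1$ one can bypass alterations entirely: $H^1_{\mrm{\acute{e}t}}(X_{\bar L},\mbb{Q}_\ell)$ is dual to the $\ell$-adic Tate module of $\mrm{Pic}^0_{X/K}$, which acquires semistable reduction over a finite extension independent of $\ell$ by Grothendieck's semistable reduction theorem for abelian varieties, and the Tate module of a semistable abelian variety is a semistable Galois representation at every finite place.)
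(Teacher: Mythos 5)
Your overall strategy --- de Jong alterations to reduce to a strictly semistable model, unipotence of inertia for such models, the trace identity $f_*\circ f^*=\deg(f)$ to realize $H^w_{\mrm{\acute{e}t}}(X_{\bar L},\mbb{Q}_\ell)$ inside the cohomology of the altered scheme, and the comparison theorems at the good places --- is exactly the paper's strategy, and you are right that the oddness of $w$ is never used. But the step you yourself flag as ``the hard part'' is a genuine gap, and neither of your two proposed fixes works as stated. De Jong's semistable alteration theorem (Theorem 6.5 of \cite{dJ}) is a statement over a trait, i.e.\ over one complete discrete valuation ring at a time; there is no off-the-shelf ``global form'' producing a single alteration $X'\to X_L$ with a model over $\mcal{O}_L$ that is strictly semistable at all the bad places simultaneously. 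The iteration you suggest also fails: an alteration is only generically finite, it is not an isomorphism away from a chosen place, and applying a second alteration to repair the situation at $v_2$ replaces the generic fiber by a new one, so the semistability already secured at $v_1$ (which was a statement about a model of the \emph{old} generic fiber) does not survive. Without this step your argument does not produce the field $L$.

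The paper's proof shows how to avoid ever needing a global alteration. For each bad place $v$ of $K$ it applies de Jong purely locally over $\mcal{O}_{K_v}$, obtaining a finite extension $K_v'$ of the completion and a strictly semistable alteration $\mcal{Y}^v$ of a model of $X_{K_v}$; the trace argument then gives unipotence of inertia on $H^w_{\mrm{\acute{e}t}}(X_{\bar K},\mbb{Q}_\ell)$ for the inertia group of \emph{one} place sitting over $v$, namely one inducing $K_v'$. It then realizes each $K_v'$ as the completion $K(v)_{w(v)}$ of a global finite extension $K(v)/K$ (\cite{La}, Ch.~II, \S2, Prop.~4) and takes $L$ to be the Galois closure over $K$ of the compositum of all the $K(v)$. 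The point of the Galois closure is that for an \emph{arbitrary} place $w_L$ of $L$ above $v$, the inertia subgroup $I_{w_L}$ is conjugate in $G_K$ to $I_{w_L'}$ for a place $w_L'$ above $w(v)$; since $H^w_{\mrm{\acute{e}t}}(X_{\bar K},\mbb{Q}_\ell)$ is a representation of all of $G_K$, unipotence transfers by conjugation. If you replace your first step by this local-then-globalize-then-Galois-closure argument, the rest of your write-up goes through.
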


\noindent
In particular, we have the following:
Let $X$ and $L$ be as above. 
Fix a prime number $\ell_0\notin S_X$ and 
take a prime number $\ell$ such that 
$\ell\not =\ell_0$ and $\ell\notin S_X$. 
Then  $H^w_{\mrm{\acute{e}t}}(X_{\bar L}, \mbb{Q}_{\ell})^{\vee}$
satisfies $\mrm{(G}$-$\mrm{1)}$,  $\mrm{(G}$-$\mrm{2)}'$ and 
$\mrm{(G}$-$\mrm{3)}$ as a representation of $G_L$.

\begin{proof}[Proof of Proposition \ref{var}]
If we admit the semistable conjecture for $X$,
then we can prove this proposition easily.
However, we can obtain the desired result 
without using the semistable conjecture as below:
For any algebraic extension $K'$ of $K$,
denote by $S_{X,K'}$ the set of places of $K'$ 
which is above one of the prime  numbers in $S_X$.
Take any place $v\in S_{X,K}$.
By de Jong's alteration theorem (\cite{dJ}, Theorem 6.5), 
there exist a finite extension $K_v'$ of $K_v$, 
a proper strictly semistable scheme $\mcal{Y}^v$  
over $\mcal{O}_{K_v'}$ 
and a morphism $\mcal{Y}^v\to \mcal{X}$
compatible with 
$\mrm{Spec}(\mcal{O}_{K_v'})\to \mrm{Spec}(\mcal{O}_{K_v})$
such that the morphism 
$f\colon \mcal{Y}^v\to \mcal{X}_{\mcal{O}_{K_v'}}$ induced by 
the above morphism
is an \'etale alteration
(see also \cite{Ts}, Theorem A3).
Here $\mcal{X}$ is a proper flat model of $X_{K_v}$ over $\mcal{O}_{K_v}$.
Such a model always exists by the compactification theorem of Nagata.
Take any prime number $\ell'$.
If we denote by $f_{\ast}$ and $f^{\ast}$ the induced homomorphisms 
$H^w_{\mrm{\acute{e}t}}(\mcal{Y}^v_{\bar K_v'},\mbb{Q}_{\ell'})\to 
H^w_{\mrm{\acute{e}t}}(X_{\bar K_v},\mbb{Q}_{\ell'})$ 
and $H^w_{\mrm{\acute{e}t}}(X_{\bar K_v},\mbb{Q}_{\ell'})\to 
H^w_{\mrm{\acute{e}t}}(\mcal{Y}^v_{\bar K_v'},\mbb{Q}_{\ell'})$ 
respectively,
then the map $f_{\ast}\circ f^{\ast}$ is the 
multiplication by $\mrm{deg}(f)$.
In particular, 
the map $f^{\ast}$ is injective.
Thus we may consider that  
$H^w_{\mrm{\acute{e}t}}(X_{\bar K_v'},\mbb{Q}_{\ell'})$ is
a sub-representation of 
$H^w_{\mrm{\acute{e}t}}(\mcal{Y}^v_{\bar K_v'},\mbb{Q}_{\ell'})$.
Now take a finite extension $K(v)$ of $K$ 
and a place $w(v)$ of $K(v)$ above $v$ such that 
$K(v)_{w(v)}=K_{v}'$, 
where $K(v)_{w(v)}$ is the $w(v)$-adic completion of $K(v)$.
The existence of $K(v)$ and $w(v)$
is an easy consequence of \cite{La}, 
Chapter II, Section 2, Proposition 4.
We denote by $L$ the Galois closure, over $K$,
of the field generated by all $K(v)$.
Here $v$ runs through all the places of $K$ in $S_{X,K}$.
Now we take a prime number $\ell\notin S_X$.
It suffices to show that 
the $\ell$-adic representation 
$H^w_{\mrm{\acute{e}t}}(X_{\bar L}, \mbb{Q}_{\ell})$ of $G_L$
is everywhere semistable.
Take any finite place $w_L$ of $L$.
If $w_L\notin S_{X,L}$, then $X$ has good reduction at $w_L$
and in particular 
$H^w_{\mrm{\acute{e}t}}(X_{\bar L}, \mbb{Q}_{\ell})$ is semistable
at $w_L$.
Suppose $w_L\in S_{X,L}$.
We denote the restriction of $w_L$ to $K$ by $v$.
Take $\mcal{Y}^v$ and the place $w(v)$ of $K(v)$ as  above.
Furthermore, we take a place $w_L'$ of $L$ above $w(v)$.
Since the action of $I_{w_L'}$ is unipotent on 
$H^w_{\mrm{\acute{e}t}}(\mcal{Y}^v_{\bar L},\mbb{Q}_{\ell})$, we 
have that the action of $I_{w_L'}$ on 
$H^w_{\mrm{\acute{e}t}}(X_{\bar K},\mbb{Q}_{\ell})$ is unipotent, too.
Since the inertia subgroup $I_{w_L'}$ conjugates with $I_{w_L}$
by the element of $G_K$, 
we see that 
the action of $I_{w_L}$ on 
$H^w_{\mrm{\acute{e}t}}(X_{\bar K},\mbb{Q}_{\ell})$ is also unipotent,
that is, 
$H^w_{\mrm{\acute{e}t}}(X_{\bar L},\mbb{Q}_{\ell})$
is semistable at $w_L$.
This finishes the proof.
\end{proof}

\begin{definition}
\label{Def1}
Put 
$\circ:=(n,\ell_0,r,\bar w)$ and 
$\bullet:=(n,\ell_0,r,w)$.

\noindent
(1) We denote by $\mrm{Rep}_{\mbb{Q}_{\ell}}
(G_K)^{\circ}_{\mrm{cycl}}$
(resp.\
$\mrm{Rep}_{\mbb{Q}_{\ell}}
(G_K)^{\bullet}_{\mrm{cycl}}$) 
the set of isomorphism classes of $n$-dimensional 
$\ell$-adic representations $V$ of $G_K$ 
which satisfy (G-1), (G-2) and (F-2)
(resp.\ (G-1), (G-2)$'$ and (F-2)).

\noindent
(2)  We denote by $\mrm{Rep}_{\mbb{Q}_{\ell}}
(G_K)^{\circ}$ 
(resp.\ $\mrm{Rep}_{\mbb{Q}_{\ell}}
(G_K)^{\bullet}$ )
the set of isomorphism classes of $n$-dimensional 
$\ell$-adic representations $V$ of $G_K$ 
which satisfy (G-1), (G-2), (G-3) and (F-1)
(resp.\ (G-1), (G-2)$'$, (G-3) and (F-1)). 
\end{definition}

\noindent
Clearly, we have 

\[
\mrm{Rep}_{\mbb{Q}_{\ell}}
(G_K)^{\circ}_{\mrm{cycl}}
\quad  \subset  \quad 
\mrm{Rep}_{\mbb{Q}_{\ell}}
(G_K)^{\circ}\\
\]
\[
\cup \qquad \qquad  \qquad \qquad \cup 
\]
\[
\mrm{Rep}_{\mbb{Q}_{\ell}}
(G_K)^{\bullet}_{\mrm{cycl}}
\quad  \subset  \quad  
\mrm{Rep}_{\mbb{Q}_{\ell}}
(G_K)^{\bullet},\\
\]
\noindent
where $\bullet=(n,\ell_0, r,w)$
and $\circ=(n,\ell_0,r,\bar w)$ for any $nw\le \bar w$.

Our main concern in this section is the following question:
\begin{question}
Does there exist a constant $C$
which depends on $K$ and $\bullet$ $($or $\circ$ $)$
such that the sets defined in Definition \ref{Def1}
are empty for $\ell>C$? 
If the answer is positive,
how can we evaluate such a constant $C$? 
\end{question}

\begin{remark}[Trivial case]
Take a representation  $V\in \mrm{Rep}_{\mbb{Q}_{\ell}}
(G_K)^{\bullet}$.
By (G-2), 
the complex absolute value of 
the determinant of $\mrm{Fr}_{v_0}$ acting on $V$ is 
$q_{v_0}^{nw/2}$ and this must be an integer.
From this fact, if $n$ and $w$ are odd and 
the extension $K/\mbb{Q}$ is Galois of an odd degree,
then $\mrm{Rep}_{\mbb{Q}_{\ell}}(G_K)^{\bullet}$ is empty
for any prime $\ell\not =\ell_0$.
As this example, there exist lots of pairs of 
$(K,\bullet )$ (resp.\ $(K,\circ)$)
such that $\mrm{Rep}_{\mbb{Q}_{\ell}}(G_K)^{\bullet}$
(resp.\ $\mrm{Rep}_{\mbb{Q}_{\ell}}(G_K)^{\circ}$) is empty
for a prime $\ell$ (large enough).
We hope to know ``non-trivial cases'' of the emptiness of 
the sets given in Definition \ref{Def1}.
\end{remark}

\subsection{Main results}

We denote by 
$d,d_K$ and $h_K^{+}$ the extension degree of $K$ over $\mbb{Q}$,
the discriminant of $K$ and the narrow class number of $K$,
respectively.
Put 
$M:=\mrm{max}\{nr,\bar w/2 \}$
and
\begin{align*}
c_n:=
\left\{
\begin{array}{cl}
\left(\begin{smallmatrix}
  n \\ n/2
\end{smallmatrix}\right)
 &\quad 
\mrm{if}\ n\ \mrm{is\ even}, \cr
\left(\begin{smallmatrix}
  n \\ (n-1)/2
\end{smallmatrix}\right)
 &\quad \mrm{if}\ n\ \mrm{is\ odd}.
\end{array}
\right.
\end{align*}
Clearly this is equal to 
$\mrm{max}\{\left(\begin{smallmatrix}
  n \\ m
\end{smallmatrix}\right)
\mid 0\le m \le n
\}$.
Now we put
\begin{align*}
&\varepsilon_1:=dM,\quad \varepsilon_2:=d\varepsilon_1,\quad
\varepsilon_1':=dh_K^+M,\quad 
\varepsilon_2':=d\varepsilon_1',\\
& C_1:=C_1(d,\bullet):=
2c_n\ell^{\varepsilon_1}_0,\quad 
C_2:=C_2(d,\bullet):=
2c_n\ell^{\varepsilon_2}_0,\\
& C_1':=C_1'(K,\bullet):=2c_n \ell_0^{\varepsilon_1'}, \quad 
C_2':=C'_2(K,\bullet):=2c_n \ell_0^{\varepsilon_2'}.
\end{align*}

The following two propositions play an essential role
for our main results.
\begin{proposition}
\label{Thm1}
Any $\ell$-adic representation $V$ 
in the set 
$\mrm{Rep}_{\mbb{Q}_{\ell}}
(G_K)^{\circ}_{\mrm{cycl}}$
has  tame inertia weights $e_{\lambda}w_1(V)/2,e_{\lambda}w_2(V)/2,\dots , e_{\lambda}w_n(V)/2$ 
at any place ${\lambda}$ of $K$ above $\ell$
under any one of the following
situations:

$\mrm{(a)}$ 
$\ell\nmid d_K$ and $\ell > C_1$;

$\mrm{(b)}$ 
$\ell > C_2$.
\end{proposition}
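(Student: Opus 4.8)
The plan is to combine Caruso's bound (Theorem \ref{Ca}) at a place $\lambda \mid \ell$ with the Weil-number information coming from (G-2), and use the cyclotomic filtration (F-2) to pin down the tame inertia weights exactly. First, let $V \in \mrm{Rep}_{\mbb{Q}_{\ell}}(G_K)^{\circ}_{\mrm{cycl}}$, fix a place $\lambda \mid \ell$, and pick a $G_{\lambda}$-stable $\mbb{Z}_{\ell}$-lattice. By (G-1) and (F-2), the residual representation $\bar V|_{G_\lambda}$ is an iterated extension of powers $\chi_\ell^{a_1}, \dots, \chi_\ell^{a_n}$ of the mod $\ell$ cyclotomic character, so on $I_\lambda$ it is (up to semisimplification) $\bigoplus_k \chi_\ell^{a_k}|_{I_\lambda}$. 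Since $\theta_1^{e_\lambda}$ is the mod $\ell$ cyclotomic character restricted to $I_t$ (Serre, as recalled in the excerpt), the tame inertia weights of $\bar V|_{I_\lambda}$ are exactly the residues of $e_\lambda a_k$ modulo $\ell - 1$, each lifted to $\{0,1,\dots,\ell-2\}$. On the other hand, Theorem \ref{Ca} forces every tame inertia weight to lie in $[0, e_\lambda r]$, and since $e_\lambda r < \ell - 1$ (we will verify $C_1, C_2$ are large enough that $\ell > e_\lambda r + 1$), the weight attached to the $k$-th graded piece equals the unique integer in $[0, e_\lambda r]$ congruent to $e_\lambda a_k \bmod (\ell-1)$.

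The next step is to identify $a_k \bmod (\ell-1)$ via the Frobenius eigenvalues at $\lambda_0$. Taking determinants of the graded pieces, the product $\prod_k \chi_\ell^{a_k} = \det \bar V$, and more usefully each $a_k$ is an exponent we must control individually; here one exploits that $\det(T - \mrm{Fr}_{\lambda_0}\mid V)$ has rational integer coefficients with roots of absolute value $q_{\lambda_0}^{w_j(V)/2}$ (so in particular $w_j(V)/2$ are the relevant ``geometric'' weights). The point is that the multiset $\{w_j(V)\}$ and the multiset $\{a_k \bmod (\ell-1)\}$ must match: the characteristic polynomial of $\mrm{Fr}_{\lambda_0}$ on $V$ reduces mod $\ell$ to that of $\mrm{Fr}_{\lambda_0}$ on $\bar V$, which by (F-2) is $\prod_k (T - \chi_\ell^{a_k}(\mrm{Fr}_{\lambda_0}))$, and $\chi_\ell(\mrm{Fr}_{\lambda_0}) \equiv q_{\lambda_0} \bmod \ell$. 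Comparing with the factorization over $\mbb{C}$ and reducing mod a prime above $\ell$, each root $q_{\lambda_0}^{w_j(V)} \cdot (\text{unit})$ forces $q_{\lambda_0}^{a_k} \equiv q_{\lambda_0}^{w_j(V)} \bmod \ell$ for a matching of indices. Writing $q_{\lambda_0} = \ell_0^{f_{\lambda_0}}$ (or a power), one gets $\ell_0^{f_{\lambda_0} a_k} \equiv \ell_0^{f_{\lambda_0} w_j(V)} \bmod \ell$; provided $\ell$ is larger than the relevant power of $\ell_0$ — which is precisely where $M = \max\{nr, \bar w/2\}$, $\varepsilon_1 = dM$, and the constants $C_1 = 2c_n \ell_0^{\varepsilon_1}$, $C_2 = 2c_n \ell_0^{\varepsilon_2}$ enter, via bounding $|a_k|$ by $e_\lambda r \le dr$ after clearing denominators and bounding $|w_j(V)| \le \bar w$ — the congruence $a_k \equiv w_j(V) \pmod{\mrm{ord}_\ell(\ell_0)}$ upgrades to honest comparison of small integers, hence $e_\lambda a_k \equiv e_\lambda w_j(V) \bmod (\ell-1)$ and the tame inertia weight is $e_\lambda w_j(V)/2$.

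Concretely I would organize it as: (i) reduce to $k$ algebraically closed and note $e_\lambda \le d$; (ii) use (F-2) plus $\chi_\ell|_{I_t} = \theta_1^{e_\lambda}$ to compute all tame inertia weights of $\bar V|_{I_\lambda}$ as normalized residues of $e_\lambda a_k$; (iii) use (G-2) and reduction mod $\ell$ of characteristic polynomials to match $\{a_k \bmod (\ell-1)\}$ with $\{w_j(V)\}$, the case-split (a)/(b) handling whether $\ell \mid d_K$ (affecting whether $e_\lambda$ can exceed $1$, hence the extra factor of $d$ in $\varepsilon_2$ versus $\varepsilon_1$) and the factor $c_n$ absorbing the number of ways roots can be grouped; (iv) invoke Theorem \ref{Ca} to select the correct lift into $[0, e_\lambda r]$, which is $e_\lambda w_j(V)/2$ once $\ell$ exceeds the stated bound. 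The main obstacle is step (iii): one has control of the $a_k$ only modulo $\ell - 1$ (from the cyclotomic twists) and modulo $\ell$ (from Frobenius eigenvalues), and must combine these with the a priori size bounds from (G-1) and (G-2) to force an exact equality of integers; getting the bookkeeping of the constants $\ell_0^{\varepsilon_i}$ and $c_n$ to come out exactly as stated, uniformly in $\ell$, is the delicate part, whereas steps (i), (ii), (iv) are essentially formal given the results recalled in Section 1.
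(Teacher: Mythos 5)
Your steps (i) and (ii), and the appeal to Theorem \ref{Ca}, match the paper's setup: one writes the graded characters as $\psi_k=\chi_{\ell}^{a_k}$, restricts to $I_{\lambda}$ where $\chi_{\ell}=\theta_{1,\lambda}^{e_{\lambda}}$, and invokes Caruso to write $\psi_k=\theta_{1,\lambda}^{b_{k,\lambda}}$ on $I_{\lambda}$ with $0\le b_{k,\lambda}\le e_{\lambda}r$, so that $e_{\lambda}a_k\equiv b_{k,\lambda}\pmod{\ell-1}$. The gap is in your step (iii), which you yourself flag as the delicate part but whose sketched mechanism does not work. The Frobenius eigenvalues $\alpha_j$ at $\lambda_0$ are Weil numbers: all their complex absolute values equal $q_{\lambda_0}^{w_j(V)/2}$, but they are not of the form $q_{\lambda_0}^{w_j(V)}\cdot(\mathrm{unit})$, so no root-by-root congruence $q_{\lambda_0}^{a_k}\equiv q_{\lambda_0}^{w_j(V)}\pmod{\ell}$ can be extracted; and even granting such a congruence, the multiplicative order of $\ell_0$ modulo $\ell$ can be very small however large $\ell$ is, so a congruence modulo $\mrm{ord}_{\ell}(\ell_0)$ pins down nothing. (There is also a factor-of-two slip: the exponent to be matched against $a_k$ is $w_j(V)/2$, not $w_j(V)$.)

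What actually closes the argument is the paper's Lemma \ref{Lem0} (the Rasmussen--Tamagawa symmetric-function comparison). From the relation $(\ast)$ one only knows the equality of multisets $\{\alpha_k^{e_{\lambda}}\}=\{q_{\lambda_0}^{b_{k,\lambda}}\}$ in $\bar{\mbb{F}}_{\ell}$. One then compares each elementary symmetric function of the two sides: by (G-2) these are rational integers, bounded in absolute value by $c_n q_{\lambda_0}^{e_{\lambda}\bar w/2}$ and $c_n q_{\lambda_0}^{nre_{\lambda}}$ respectively, hence both by $c_n\ell_0^{dMe_{\lambda}}<\ell/2$; being congruent mod $\ell$ they are equal, whence $\prod_k(T-\alpha_k^{e_{\lambda}})=\prod_k(T-q_{\lambda_0}^{b_{k,\lambda}})$ as polynomials, and taking complex absolute values of the roots gives $\{b_{k,\lambda}\}=\{e_{\lambda}w_1(V)/2,\dots,e_{\lambda}w_n(V)/2\}$ as multisets. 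Note that this estimate requires the exponents on the right-hand side to be the $b_{k,\lambda}\in[0,e_{\lambda}r]$ furnished by Theorem \ref{Ca}; your plan defers Caruso's bound to a final ``lift selection'' step (iv) and works in (iii) with the $a_k$, which are controlled only modulo $\ell-1$ and therefore admit no useful archimedean bound. So Caruso's input must enter before, not after, the integrality comparison. With step (iii) replaced in this way, the constants come out as stated via $e_{\lambda}=1$ when $\ell\nmid d_K$ (case (a), giving $C_1$) and $e_{\lambda}\le d$ in general (case (b), giving $C_2$).
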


\begin{proposition}
\label{Thm2}
Suppose that $\ell$ is a prime number which does not split 
in $K$.
Any $\ell$-adic representation $V$
in the set 
$\mrm{Rep}_{\mbb{Q}_{\ell}}
(G_K)^{\circ}$
has  tame inertia weights $e_{\lambda}w_1(V)/2,e_{\lambda}w_2(V)/2,\dots , e_{\lambda}w_n(V)/2$ 
at the unique place ${\lambda}$ of $K$ above $\ell$
under any one of the following
situations:

$\mrm{(a)}$ 
$\ell\nmid d_K$ and $\ell > C'_1$;

$\mrm{(b)}$ 
$\ell > C'_2$.

\end{proposition}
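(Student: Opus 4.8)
The plan is to translate the computation of the tame inertia weights of $V$ at $\lambda$ into a statement about one-dimensional residual characters, and then to pin those characters down — up to an everywhere-unramified, hence bounded-order, twist — by comparing their values at $\mrm{Fr}_{\lambda_0}$ with the Frobenius eigenvalues constrained by (G-2).

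First I would invoke (F-1): the filtration on $\bar V$ is by $\mbb{F}_{\ell}[G_K]$-submodules with one-dimensional graded pieces, so the semisimplification of $\bar V$ equals $\bigoplus_{k=1}^{n}\psi_k$ for characters $\psi_k\colon G_K\to\mbb{F}_{\ell}^{\times}$. In particular each $\psi_k$ has order dividing $\ell-1$, hence is trivial on the pro-$\ell$ group $I_w$, so $\psi_k|_{I_{\lambda}}$ factors through $I_t$; write $\psi_k|_{I_t}=\theta_1^{b_k}$ with $0\le b_k\le\ell-2$. Then the tame inertia weights of $V$ at $\lambda$ are exactly $b_1,\dots,b_n$, and Theorem~\ref{Ca}, applied to a $G_{\lambda}$-stable lattice in the semistable representation $V|_{G_{\lambda}}$ (condition (G-1)), gives $0\le b_k\le e_{\lambda}r$. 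Finally, (G-3) makes each $\psi_k$ unramified at every finite place not dividing $\ell$.

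The key step is the identification of the $\psi_k$. As $\ell$ does not split in $K$, the place $\lambda$ is the unique one above $\ell$ and $\chi_{\ell}|_{I_{\lambda}}=\theta_1^{e_{\lambda}}$; therefore $\psi_k^{e_{\lambda}}\chi_{\ell}^{-b_k}$ is trivial on $I_t$ and unramified outside $\lambda$, hence unramified everywhere, so it factors through $\mrm{Gal}(H^{+}/K)$ for the narrow Hilbert class field $H^{+}$ of $K$ and has order dividing $h_K^{+}$. Consequently $\psi_k^{e_{\lambda}h_K^{+}}=\chi_{\ell}^{h_K^{+}b_k}$. (In situation $\mrm{(a)}$ the hypothesis $\ell\nmid d_K$ forces $\ell$ to be unramified, hence inert, so $e_{\lambda}=1$ — this is what gives the sharper constant.) Evaluating at $\mrm{Fr}_{\lambda_0}$, unramified for $V$ by (G-2) and for $\chi_{\ell}$: writing $P(T):=\det(T-\mrm{Fr}_{\lambda_0}\mid V)\in\mbb{Z}[T]$ with roots $\alpha_1,\dots,\alpha_n$, the identity $\bar P(T)=\prod_k\bigl(T-\psi_k(\mrm{Fr}_{\lambda_0})\bigr)$ together with $\psi_k(\mrm{Fr}_{\lambda_0})^{e_{\lambda}h_K^{+}}=\chi_{\ell}^{h_K^{+}b_k}(\mrm{Fr}_{\lambda_0})=q_{\lambda_0}^{h_K^{+}b_k}$ shows that $P_1(T):=\prod_i\bigl(T-\alpha_i^{e_{\lambda}h_K^{+}}\bigr)$ and $P_2(T):=\prod_k\bigl(T-q_{\lambda_0}^{h_K^{+}b_k}\bigr)$, both of which lie in $\mbb{Z}[T]$ (the former because the root multiset $\{\alpha_i\}$ is $\mrm{Gal}(\bar{\mbb{Q}}/\mbb{Q})$-stable), satisfy $P_1\equiv P_2$ modulo $\ell$.

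It remains to bound the coefficients of $P_1-P_2$ so as to force $P_1=P_2$ once $\ell$ is large. The $j$-th coefficient is an integer; estimating the relevant elementary symmetric function by $\binom{n}{j}$ times its largest monomial and using $|\alpha_i|=q_{\lambda_0}^{w_i(V)/2}$ with $\sum_i w_i(V)\le\bar w$ together with $b_k\le e_{\lambda}r$, its absolute value is at most $2\binom{n}{j}\,q_{\lambda_0}^{e_{\lambda}h_K^{+}M}\le 2c_n\,\ell_0^{d\,e_{\lambda}h_K^{+}M}$ (here $M=\max\{nr,\bar w/2\}$ and $q_{\lambda_0}\le\ell_0^{d}$); in situation $\mrm{(a)}$ where $e_{\lambda}=1$ this is $C_1'$, and in general, where $e_{\lambda}\le d$, it is $C_2'$. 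Hence for $\ell$ beyond the stated bound $P_1=P_2$, so $\{\alpha_i^{e_{\lambda}h_K^{+}}\}=\{q_{\lambda_0}^{h_K^{+}b_k}\}$ as multisets, and comparing complex absolute values yields $\{e_{\lambda}w_i(V)/2\}=\{b_k\}$, which is the assertion. I expect the main obstacle to lie in the third paragraph: that (F-1) forces the order of $\psi_k$ to divide $\ell-1$ (hence tameness at $\lambda$), and that this, combined with the non-split hypothesis, the relation $\chi_{\ell}|_{I_{\lambda}}=\theta_1^{e_{\lambda}}$, and Caruso's bound, lets one express $\psi_k$ as an explicit power of $\chi_{\ell}$ times an everywhere-unramified character of order dividing $h_K^{+}$; granted this, the polynomial manipulation and the counting estimate are routine bookkeeping.
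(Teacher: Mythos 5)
Your proposal is correct and follows essentially the same route as the paper: express the graded pieces of the residual filtration as characters $\psi_k$, use Caruso's theorem to write $\psi_k=\theta_1^{b_k}$ on inertia with $0\le b_k\le e_{\lambda}r$, use (G-3) and the non-split hypothesis to make $\psi_k^{e_{\lambda}}\chi_{\ell}^{-b_k}$ everywhere unramified hence of order dividing $h_K^{+}$, and then compare Frobenius eigenvalues via the integrality-plus-archimedean-bound argument (the paper's Lemma \ref{Lem0}). The only cosmetic difference is that you raise to the exponent $e_{\lambda}h_K^{+}$ directly, while the paper uses an auxiliary integer $D\in\{1,d\}$ with $D/e_{\lambda}\in\mbb{Z}$; since $e_{\lambda}\le D$ in each situation, this yields the same constants $C_1'$ and $C_2'$.
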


\noindent
To prove these propositions,
we need the following lemma:

\begin{lemma}
\label{Lem0}
Let $s,t_1,t_2,\dots ,t_n$ and $u$ be 
non-negative integers 
such that $0\le s\le u$ and $0\le t_k\le ru$ for all $k$.
Let $V$ be an $n$-dimensional $\ell$-adic representation 
of $G_K$ which satisfies $\mrm{(G}$-$\mrm{2)}$.
Decompose $\mrm{det}(T-\mrm{Fr}_{{\lambda}_0}|V)=
\prod_{1\le k\le n}(T-\alpha_k)$.
If the set $\{\alpha_1^{s},\alpha_2^{s},\dots ,\alpha_n^{s}\}$
coincides with the set 
$\{q_{{\lambda}_0}^{t_1},q_{{\lambda}_0}^{t_2},\dots,q_{{\lambda}_0}^{t_n}\}$
in $\bar{\mbb{F}}_{\ell}$
and $\ell>2c_n\ell_0^{dMu}$,
then $\{\alpha_1^{s},\alpha_2^{s},\dots ,\alpha_n^{s}\}
=\{q_{{\lambda}_0}^{t_1},q_{{\lambda}_0}^{t_2},\dots,q_{{\lambda}_0}^{t_n}\}$.
In particular, 
we obtain
$\{sw_1(V)/2,sw_2(V)/2,\dots ,sw_n(V)/2\}=
\{t_1,t_2,\dots ,t_n\}$.
\end{lemma}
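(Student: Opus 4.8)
The plan is to repackage the two multisets as monic integer polynomials and compare their coefficients archimedeanly. First I would set $P(T):=\prod_{k=1}^n(T-\alpha_k^s)$ and $Q(T):=\prod_{k=1}^n(T-q_{{\lambda}_0}^{t_k})$. Since $\mrm{det}(T-\mrm{Fr}_{{\lambda}_0}|V)$ has rational integer coefficients by (G-2), its root multiset $\{\alpha_1,\dots,\alpha_n\}$ is stable under $\mrm{Gal}(\bar{\mbb{Q}}/\mbb{Q})$, hence so is $\{\alpha_1^s,\dots,\alpha_n^s\}$; as the $\alpha_k$ are algebraic integers, the elementary symmetric functions of the $\alpha_k^s$ are Galois-invariant algebraic integers, i.e.\ rational integers, so $P(T)\in\mbb{Z}[T]$, and clearly $Q(T)\in\mbb{Z}[T]$ too. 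The hypothesis says exactly that $P$ and $Q$ have the same image in $\bar{\mbb{F}}_{\ell}[T]$ under the reduction $\bar{\mbb{Z}}_{\ell}\to\bar{\mbb{F}}_{\ell}$ coming from a fixed place of $\bar{\mbb{Q}}$ above $\ell$; since $P,Q\in\mbb{Z}[T]$, this forces $P\equiv Q\pmod{\ell}$ in $\mbb{F}_{\ell}[T]$. So it remains to bound the coefficients and deduce $P=Q$ in $\mbb{Z}[T]$, then read off the conclusion.

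For the estimate, fix an embedding $\bar{\mbb{Q}}_{\ell}\hookrightarrow\mbb{C}$. For $1\le j\le n$ the coefficient of $T^{n-j}$ in $P$ is, up to sign, the $j$-th elementary symmetric function of the $\alpha_k^s$, a sum of $\binom{n}{j}\le c_n$ monomials $\prod_{k\in S}\alpha_k^s$ with $|S|=j$, and for each such monomial
\[
\Bigl|\prod_{k\in S}\alpha_k^s\Bigr|=q_{{\lambda}_0}^{(s/2)\sum_{k\in S}w_k(V)}\le q_{{\lambda}_0}^{(u/2)\bar w}\le q_{{\lambda}_0}^{uM}\le \ell_0^{dMu},
\]
using $0\le s\le u$, $\sum_{k=1}^n w_k(V)\le\bar w\le 2M$, and $q_{{\lambda}_0}=\ell_0^{f}$ with residue degree $f\le d$. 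Symmetrically, the coefficient of $T^{n-j}$ in $Q$ is $\pm$ a sum of $\binom{n}{j}\le c_n$ terms $\prod_{k\in S}q_{{\lambda}_0}^{t_k}=q_{{\lambda}_0}^{\sum_{k\in S}t_k}$ with $\sum_{k\in S}t_k\le nru\le Mu$ (here $M\ge nr$), so again each is of absolute value $\le\ell_0^{dMu}$. Hence every coefficient of $P$ and of $Q$ is a rational integer of absolute value at most $c_n\ell_0^{dMu}$.

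Finally, since $\ell>2c_n\ell_0^{dMu}$, any two rational integers of absolute value $\le c_n\ell_0^{dMu}$ that are congruent modulo $\ell$ must be equal; applying this coefficient by coefficient gives $P=Q$ in $\mbb{Z}[T]$, so $\{\alpha_1^s,\dots,\alpha_n^s\}=\{q_{{\lambda}_0}^{t_1},\dots,q_{{\lambda}_0}^{t_n}\}$ as multisets in $\bar{\mbb{Q}}$. Applying the chosen embedding into $\mbb{C}$, then taking complex absolute values and $\log_{q_{{\lambda}_0}}$ (note $q_{{\lambda}_0}>1$), and using $|\alpha_k|=q_{{\lambda}_0}^{w_k(V)/2}$, yields $\{sw_1(V)/2,\dots,sw_n(V)/2\}=\{t_1,\dots,t_n\}$. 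The only delicate point is the uniform archimedean bound in the middle step: one must estimate a product of $j$ of the $\alpha_k^s$ by $\ell_0^{dMu}$ \emph{independently of $j$}, which works because $\sum_{k\in S}w_k(V)$ is controlled by the total $\bar w$ (and, for $Q$, by $M\ge nr$) rather than growing with $j$; the naive bound $\ell_0^{jdMu}$ would be far too weak for the stated constant.
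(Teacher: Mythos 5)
Your proof is correct and follows essentially the same route as the paper's: identify the elementary symmetric functions of the $\alpha_k^s$ as rational integers, bound them and those of the $q_{\lambda_0}^{t_k}$ archimedeanly by $c_n\ell_0^{dMu}$ using $s\le u$, $\sum_k w_k(V)\le\bar w\le 2M$ and $nr\le M$, and conclude equality of the two polynomials from the congruence mod $\ell$ and the hypothesis $\ell>2c_n\ell_0^{dMu}$. The only cosmetic differences are your Galois-invariance justification of integrality (the paper instead notes that symmetric functions of the $\alpha_k^s$ are integer polynomials in those of the $\alpha_k$) and your explicit treatment of the final ``in particular'' step, which the paper leaves implicit.
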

\begin{proof}
We basically follow the proof 
by the method 
which has been pointed out by 
Rasmussen and Tamagawa.
Let us denote by $S_m(x_1, x_2,\dots , x_n)$ 
the elementary symmetric polynomial 
of degree $m$ with $n$-indeterminates $x_1, x_2,\dots ,x_n$ 
for $0\le m\le n$, that is, 
\[
\prod_{1\le k\le n}(T-x_k)=
\sum_{0\le m\le n}S_m(x_1, x_2,\dots , x_n)T^{n-m}.
\]
For any $0\le m\le n$, the condition (G-2) implies that 
$S_m(\alpha_1,\alpha_2,\dots,\alpha_n)$ is a rational integer for all $m$
and hence $S_m(\alpha_1^{s},\alpha_2^{s},\dots,\alpha_n^{s})$,
which is a symmetric polynomial of $\alpha_1,\alpha_2,\dots , \alpha_n$, 
is also a rational integer. 
On the other hand,
we have 
\begin{align*}
|S_m(\alpha_1^{s}, \alpha_2^{s},\dots ,\alpha_n^{s})|
&\le 
\sum_{1\le s_1< \cdots < s_m\le n}
(q_{{\lambda}_0}^{(w_{s_1}(V)+\cdots +w_{s_m}(V))/2})^s\\
&\le
\sum_{1\le s_1< \cdots < s_m\le n}
(q_{{\lambda}_0}^{\bar w/2})^s
=
\left(\begin{smallmatrix}
  n \\ m
\end{smallmatrix}\right)
(q_{{\lambda}_0}^{\bar w/2})^s
\le c_n\ell_0^{dMu}  
\end{align*}
and 
\begin{align*}
|S_m(q_{{\lambda}_0}^{t_1},q_{{\lambda}_0}^{t_2},\dots ,q_{{\lambda}_0}^{t_n})|
&\le 
\sum_{1\le s_1< \cdots < s_m\le n}
q_{{\lambda}_0}^{t_{s_1}+\cdots +t_{s_m}}\\
&\le
\sum_{1\le s_1< \cdots < s_m\le n}
q_{{\lambda}_0}^{nru}
=
\left(\begin{smallmatrix}
  n \\ m
\end{smallmatrix}\right)
q_{{\lambda}_0}^{nru}\le c_n\ell_0^{dMu}
\end{align*}
\noindent
by (G-2),
where $|\cdot|$ is the complex absolute value.
Since we have 
$S_m(\alpha_1^{s}, \alpha_2^{s},\dots ,\alpha_n^{s})
\equiv 
S_m(q_{{\lambda}_0}^{t_1},q_{{\lambda}_0}^{t_2},\dots ,q_{{\lambda}_0}^{t_n})$
mod $\ell$
and $\ell> 2c_n\ell_0^{dMu}$, 
we obtain
\[
S_m(\alpha_1^{s}, \alpha_2^{s},\dots ,\alpha_n^{s})
=S_m(q_{{\lambda}_0}^{t_1},q_{{\lambda}_0}^{t_2},\dots ,q_{{\lambda}_0}^{t_n})
\]
for all $m$.
This implies
\begin{align*}
\prod_{1\le k\le n}(T-\alpha_k^{s})&=
\sum_{0\le m\le n}
S_m(\alpha_1^{s},\alpha_2^{s},\dots ,\alpha_n^{s})T^{n-m}\\
&=\sum_{0\le m\le n}
S_m(q_{{\lambda}_0}^{t_1},q_{{\lambda}_0}^{t_2},\dots ,q_{{\lambda}_0}^{t_n})T^{n-m}\\
&=\prod_{1\le k\le n}(T-q_{{\lambda}_0}^{t_k})
\end{align*}
and thus we finish the proof.
\end{proof}

Now we start the proofs of 
Proposition \ref{Thm1} and \ref{Thm2}. 
Take any representation $V$ which is an element of the set 
$\mrm{Rep}_{\mbb{Q}_{\ell}}
(G_K)^{\circ}$
and denote its residual representation by $\bar V$.
Then the representation 
$\bar V$ has a filtration of $G_K$-modules 
\[
\{0\}= \bar V_0\subset \bar V_1 \subset \dots \subset
\bar V_{n-1}\subset \bar V_n=\bar V
\]
such that $\bar V_k$ has dimension $k$ for each $1\le k\le n$.
We denote by $\psi_k\colon G_K\to \mbb{F}_{\ell}^{\times}$ 
the character corresponding to the 
action of $G_K$ on the quotient
$\bar{V}_k/\bar{V}_{k-1}$ for each $1\le k\le n$.
Take any place ${\lambda}$ of $K$ above $\ell$.
By Theorem \ref{Ca},
we obtain 
$\psi_k=\theta_{1,{\lambda}}^{b_{k,\lambda}}$ on $I_{\lambda}$ 
for some integer $0\le b_{k,\lambda}\le e_{\lambda}r$,
where $\theta_{1,{\lambda}}\colon I_{\lambda}
\to \mbb{F}_{\ell}^{\times}$ is the 
fundamental character of level one at ${\lambda}$.
Take a place ${\lambda}_0$ of $K$ above $\ell_0$ as in (G-2) and 
decompose $\mrm{det}(T-\mrm{Fr}_{{\lambda}_0}|V)=
\prod_k(T-\alpha_k)$.
Then, we see 
\[
\{\alpha_1,\alpha_2,\dots ,\alpha_n\}
=\{\psi_1(\mrm{Fr}_{{\lambda}_0}),
\psi_2(\mrm{Fr}_{{\lambda}_0}),\dots ,\psi_n(\mrm{Fr}_{{\lambda}_0})\} 
\qquad (\ast) 
\] 
in $\bar{\mbb{F}}_{\ell}$. 

\begin{proof}[Proof of Proposition \ref{Thm1}]
Assume that $V$ is an element of the set 
$\mrm{Rep}_{\mbb{Q}_{\ell}}
(G_K)^{\circ}_{\mrm{cycl}}$.
Then we may suppose  
$\psi_k=\chi_{\ell}^{a_{k}}$ for any $k$
by (F-2).
The relation 
$\chi_{\ell}^{a_{k}}=
\theta_{1,\lambda}^{b_{k,\lambda}}$ on $I_{\lambda}$ 
implies 
$\theta_{1,\lambda}^{e_{\lambda}a_{k}}=
\theta_{1,\lambda}^{b_{k,\lambda}}$ and thus  
$e_{\lambda}a_{k}\equiv b_{k,\lambda}$ mod $\ell-1$.
Hence we have 
$\chi_{\ell}^{e_{\lambda}a_{k}}=
\chi_{\ell}^{b_{k,\lambda}}$ on $G_K$
and thus the set
$\{\alpha_1^{e_{\lambda}}, \alpha_2^{e_{\lambda}},
\dots ,\alpha_n^{e_{\lambda}}\}$
coincides with the set 
$\{q_{{\lambda}_0}^{b_{1,{\lambda}}},
q_{{\lambda}_0}^{b_{2,\lambda}},\dots ,q_{\lambda_0}^{b_{n,\lambda}}\}$
in $\bar{\mbb{F}}_{\ell}$ by $(\ast)$.
By Lemma \ref{Lem0},
we have 
\[
\{e_{\lambda}w_1(V)/2,\dots, e_{\lambda}w_n(V)/2\}
=\{b_{1,\lambda},\dots, b_{n,\lambda}\}
\]
if $\ell>2B_n\ell_0^{dMe_{\lambda}}$.
Since
$e_{\lambda}\le d$ and $e_{\lambda}=1$ if $\ell \nmid d_K$,
we have the desired result.
\end{proof}

\begin{proof}[Proof of Proposition \ref{Thm2}]
We note that each $\psi_k$ is unramified away from $\ell$ by (G-3). 
Now we assume that any one of the following conditions 
(A) or (B) holds: 

\vspace{1mm}

\noindent 
$\mrm{(A)}$ $\ell\nmid d_K$;

\vspace{1mm}

\noindent
$\mrm{(B)}$ No additional assumptions.

\vspace{1mm}

\noindent
Setting $b'_k:=b_{k,{\lambda}}/e_{\lambda}\in \mbb{Q}$, 
we have $0\le b'_k\le r$.  
We note that, 
if we put 
\begin{align*}
D:=
\left\{
\begin{array}{cl}
1
 &\quad 
 \mrm{under\ (A)}, \cr
d
 &\quad 
 \mrm{under\ (B)},
\end{array}
\right.
\end{align*} 
then we see $D/e_{\lambda}\in \mbb{Z}$. 
Since $\psi_k=\theta_{1,{\lambda}}^{b_{k,{\lambda}}}$ on $I_{\lambda}$,
we see that 
$\psi_k^{e_{\lambda}}\chi_{\ell}^{-b_{k,{\lambda}}}$ 
is trivial on $I_{\lambda}$ and thus
$(\psi_k^{e_{\lambda}}\chi_{\ell}^{-b_{k,{\lambda}}})^{D/e_{\lambda}}=
\psi_k^D\chi_{\ell}^{-b'_kD}$ is also trivial 
on $I_{\lambda}$.
Since the characters $\psi_k$ and $\chi_{\ell}$ are unramified 
away from $\ell$, this implies that 
$\psi_k^{D}\chi_{\ell}^{-b'_kD}$ is unramified 
at all finite places of $K$ 
(recall that $\ell$ does not split in $K$).
By class field theory, it follows 
\[
\psi_k^{Dh_K^+}=\chi_{\ell}^{b'_kDh_K^+}
\]
on $G_K$.
Recall that  $h^+_K$ is the narrow class number of $K$.
Thus we have that 
the set 
$\{\alpha_1^{Dh_K^+},\alpha_2^{Dh_K^+},\dots ,\alpha_n^{Dh_K^+}\}$
coincides with the set 
$\{q_{{\lambda}_0}^{b'_1Dh_K^+},q_{{\lambda}_0}^{b'_2Dh_K^+},
\dots ,q_{{\lambda}_0}^{b'_nDh_K^+}\}$ in 
$\bar{\mbb{F}}_{\ell}$ by $(\ast)$.
Now we assume $\ell>2c_n\ell_0^{dDh_K^+M}$.
Then we have 
\[
\{Dh_K^+w_1(V)/2,\dots, Dh_K^+w_n(V)/2\}
=\{b'_1Dh_K^+,\dots, b'_nDh_K^+\}
\]
by Lemma \ref{Lem0}.
Our result comes from this equation. 
\end{proof}

Now we can obtain our main results.

\begin{theorem}
\label{Cor1} Suppose that $w$ is odd or $w> 2r$.
Then  the set 
$\mrm{Rep}_{\mbb{Q}_{\ell}}
(G_K)^{\bullet}_{\mrm{cycl}}$
is empty under any one of the following situations:

$\mrm{(a)}$ 
$w$ is odd, $\ell\nmid d_K$ and $\ell > C_1$;

$\mrm{(b)}$ 
$w$ is odd, the extension $K/\mbb{Q}$ has  odd degree 
and  $\ell > C_2$;

$\mrm{(c)}$ 
$w>2r$, $\ell\nmid d_K$ and $\ell > C_1$;

$\mrm{(d)}$
$w>2r$ and $\ell> C_2$;

$\mrm{(e)}$
$w$ and $n$ are odd, and $\ell> C_2$.
\end{theorem}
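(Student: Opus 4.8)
The plan is to derive everything from Proposition~\ref{Thm1}. Suppose $V\in\mrm{Rep}_{\mbb{Q}_{\ell}}(G_K)^{\bullet}_{\mrm{cycl}}$ with $\bullet=(n,\ell_0,r,w)$. Taking $\bar w=nw$, we may regard $V$ as lying in $\mrm{Rep}_{\mbb{Q}_{\ell}}(G_K)^{\circ}_{\mrm{cycl}}$ for $\circ=(n,\ell_0,r,nw)$, and (G-2)$'$ gives $w_1(V)=\dots=w_n(V)=w$. In situations (a) and (c) the hypothesis on $\ell$ is exactly that of Proposition~\ref{Thm1}(a), and in (b), (d), (e) that of Proposition~\ref{Thm1}(b). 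So for \emph{every} place $\lambda$ of $K$ above $\ell$, all $n$ tame inertia weights of $V$ at $\lambda$ are equal to the single number $e_{\lambda}w/2$. Everything now follows by confronting this number with two elementary facts: it is a non-negative integer (tame inertia weights are integers by definition), and it is at most $e_{\lambda}r$ by Theorem~\ref{Ca}.

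I would first settle (a)--(d). If $w>2r$ then $e_{\lambda}w/2>e_{\lambda}r$, contradicting Theorem~\ref{Ca}; this covers (c) and (d). If $w$ is odd and $\ell\nmid d_K$, then $\ell$ is unramified in $K$, so $e_{\lambda}=1$ and the tame inertia weight $w/2$ is not an integer; this covers (a). If $w$ is odd and $[K:\mbb{Q}]$ is odd, then integrality of $e_{\lambda}w/2$ forces $2\mid e_{\lambda}$ for every $\lambda\mid\ell$, whence $[K:\mbb{Q}]=\sum_{\lambda\mid\ell}e_{\lambda}f_{\lambda}$ is even --- a contradiction; this covers (b).

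For (e), with $w$ and $n$ both odd, I would bring in the determinant. By (F-2), $\det\bar V=\chi_{\ell}^{a}$ with $a=\sum_{k}a_{k}$; and since the character $\psi_{k}$ of each graded piece of $\bar V$ restricts on $I_{\lambda}$ to $\theta_{1,\lambda}^{b_{k,\lambda}}$ with $b_{k,\lambda}=e_{\lambda}w/2$ by Proposition~\ref{Thm1}, we obtain $\det\bar V|_{I_{\lambda}}=\theta_{1,\lambda}^{\sum_{k}b_{k,\lambda}}=\theta_{1,\lambda}^{ne_{\lambda}w/2}$. On the other hand $\det V|_{G_{\lambda}}=\Lambda^{n}(V|_{G_{\lambda}})$ is semistable, hence crystalline (it being one-dimensional, its monodromy operator vanishes), with Hodge--Tate weight $h_{\lambda}=r_{1}+\dots+r_{n}\in[0,nr]$, where $r_{1},\dots,r_{n}\in[0,r]$ are the Hodge--Tate weights of $V|_{G_{\lambda}}$; by the rank-one case of the theory recalled in Section~1 (the analogue of $\theta_{1}^{e}$ being the mod~$\ell$ cyclotomic character, \cite{Se}, Section~1.8, Proposition~8), the residual representation of $\det V|_{G_{\lambda}}$ restricts on $I_{\lambda}$ to $\theta_{1,\lambda}^{e_{\lambda}h_{\lambda}}$. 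Both $ne_{\lambda}w/2$ and $e_{\lambda}h_{\lambda}$ are at most $dM$, hence below $\ell-1$ once $\ell>C_{2}$, so comparing the two descriptions gives $e_{\lambda}h_{\lambda}=ne_{\lambda}w/2$, i.e.\ $h_{\lambda}=nw/2$; since $n$ and $w$ are odd this is not an integer, contradicting $h_{\lambda}\in\mbb{Z}$. (The same comparison $nw/2=h_{\lambda}\le nr$ re-proves (c) and (d).)

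The bookkeeping of these five incompatibilities, and the verification that the integers occurring stay below $\ell-1$ once $\ell>C_{1}$ (resp.\ $\ell>C_{2}$), is routine. The step I expect to require genuine care is the ingredient used in (e): identifying the restriction to $I_{\lambda}$ of the residual representation of the crystalline rank-one character $\det V|_{G_{\lambda}}$ as $\theta_{1,\lambda}^{e_{\lambda}h_{\lambda}}$ --- equivalently, recovering the integral Hodge--Tate weight of $\det V$ at $\lambda$ from the tame inertia weight of $\det\bar V$. All the rest is an immediate consequence of Proposition~\ref{Thm1} and Theorem~\ref{Ca}.
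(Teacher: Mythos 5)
Your treatment of cases (a)--(d) is correct and is essentially the paper's argument: apply Proposition~\ref{Thm1} (with $\bar w=nw$, so that $V$ sits in $\mrm{Rep}_{\mbb{Q}_{\ell}}(G_K)^{\circ}_{\mrm{cycl}}$) to get uniform tame inertia weight $e_{\lambda}w/2$ at every $\lambda\mid\ell$, then contradict either integrality of this number (using $e_{\lambda}=1$ when $\ell\nmid d_K$, or the parity of $d=\sum_{\lambda\mid\ell}e_{\lambda}f_{\lambda}$ when $d$ is odd) or the bound $e_{\lambda}w/2\le e_{\lambda}r$ from Theorem~\ref{Ca}. Where you genuinely diverge is case (e). The paper disposes of it in one line by citing Caruso--Savitt (\cite{CS}, Theorem 1): the sum of the tame inertia weights of $V$ at $\lambda$ is divisible by $e_{\lambda}$, so $ne_{\lambda}w/2\equiv 0\pmod{e_{\lambda}}$ forces $nw/2\in\mbb{Z}$, impossible for $n,w$ odd. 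You instead pass to $\det V|_{G_{\lambda}}$ and compare $\theta_{1,\lambda}^{ne_{\lambda}w/2}$ with $\theta_{1,\lambda}^{e_{\lambda}h_{\lambda}}$. This is a sound reduction --- indeed it is morally how the divisibility statement is proved --- but the ingredient you flag as delicate, namely that the residual representation of a rank-one semistable (hence crystalline) character with Hodge--Tate weight $h_{\lambda}$ restricts on $I_{\lambda}$ to $\theta_{1,\lambda}^{e_{\lambda}h_{\lambda}}$, is not supplied by \cite{Se}, Section 1.8, Proposition 8 (which only treats the cyclotomic character) and is not immediate over a ramified base: it is precisely the rank-one instance of \cite{CS}, Theorem 1, or equivalently requires the classification of crystalline characters of $G_{K_{\lambda}}$ as unramified twists of powers of $\chi_{\ell}$. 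So your proof of (e) is complete only modulo that citation; once you invoke \cite{CS} for it, you have reconstructed the paper's argument with the determinant step made explicit. Everything else, including the bookkeeping of the constants $C_1$, $C_2$ against $M=\max\{nr,nw/2\}$, checks out.
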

\begin{theorem}
\label{Cor2}
Suppose that $w$ is odd or $w> 2r$.
If $\ell$ does not split in $K$,
then the set 
$\mrm{Rep}_{\mbb{Q}_{\ell}}
(G_K)^{\bullet}$ 
is empty under
any one of the following situations:

$\mrm{(a)}$ 
$w$ is odd, $\ell\nmid d_K$ and $\ell > C'_1$;

$\mrm{(b)}$ 
$w$ is odd, the extension $K/\mbb{Q}$ has odd degree
and  $\ell > C'_2$;

$\mrm{(c)}$ 
$w>2r$, $\ell\nmid d_K$ and $\ell > C'_1$;

$\mrm{(d)}$
$w>2r$ and $\ell> C'_2$;

$\mrm{(e)}$
$w$ and $n$ are odd, and $\ell> C'_2$.

\end{theorem}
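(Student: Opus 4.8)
The plan is to derive Theorem~\ref{Cor2} by feeding Proposition~\ref{Thm2} into Caruso's bound (Theorem~\ref{Ca}), supplemented by a short computation with the determinant. So fix an $\ell$-adic representation $V\in\mrm{Rep}_{\mbb{Q}_{\ell}}(G_K)^{\bullet}$; I will produce a contradiction in each of the five situations (a)--(e). First I would note that $V$ also lies in $\mrm{Rep}_{\mbb{Q}_{\ell}}(G_K)^{\circ}$ for $\circ=(n,\ell_0,r,nw)$, and that since $\ell$ does not split in $K$ there is a unique place $\lambda\mid\ell$. Hence Proposition~\ref{Thm2} applies: via its situation (a) under hypotheses (a) and (c) of the theorem (where $\ell\nmid d_K$ and $\ell>C_1'$), and via its situation (b) under hypotheses (b), (d), (e) (where $\ell>C_2'\ge C_1'$). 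Because $w_1(V)=\cdots=w_n(V)=w$ by (G-2)$'$, the conclusion of Proposition~\ref{Thm2} says exactly that the $n$ tame inertia weights of $V$ at $\lambda$ are all equal to $e_{\lambda}w/2$; equivalently, in the notation $\psi_k|_{I_{\lambda}}=\theta_{1,\lambda}^{b_{k,\lambda}}$ ($0\le b_{k,\lambda}\le e_{\lambda}r$) introduced just before the two propositions, all $b_{k,\lambda}$ equal $e_{\lambda}w/2$.

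From this I would extract two constraints. \emph{First}, tame inertia weights are by definition non-negative integers, so $e_{\lambda}w$ is even. \emph{Second}, applying Theorem~\ref{Ca} to a $G_{\lambda}$-stable lattice of $V|_{G_{\lambda}}$ (which is legitimate by (G-1)) shows every tame inertia weight of $V$ at $\lambda$ is $\le e_{\lambda}r$, hence $e_{\lambda}w/2\le e_{\lambda}r$, i.e.\ $w\le 2r$. Now the easy cases: if $w>2r$, the second constraint is violated, which disposes of (c) and (d). If $w$ is odd, the first constraint forces $e_{\lambda}$ to be even; but under (a) the prime $\ell$ is unramified in $K$, so $e_{\lambda}=1$, and under (b) we have $e_{\lambda}\mid[K:\mbb{Q}]=d$ with $d$ odd, so $e_{\lambda}$ is odd --- a contradiction in both cases.

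The remaining situation (e), with $w$ and $n$ both odd but $K$ and the ramification of $\ell$ unrestricted, is the delicate one, and for it I would bring in $\det V$, which is a genuine (not merely residual) $1$-dimensional $\ell$-adic representation of $G_K$. By (G-1), $(\det V)|_{G_{\lambda}}$ is semistable, hence crystalline, hence of the form (cyclotomic)$^{j}\otimes(\text{unramified})$ where $j$ is the sum of the Hodge--Tate weights of $V|_{G_{\lambda}}$, so $j\in\mbb{Z}$ with $0\le j\le nr$. Reducing mod $\ell$ and restricting to $I_{\lambda}$ gives $\overline{\det V}|_{I_{\lambda}}=\chi_{\ell}^{j}|_{I_{\lambda}}=\theta_{1,\lambda}^{e_{\lambda}j}$, using $\chi_{\ell}|_{I_{\lambda}}=\theta_{1,\lambda}^{e_{\lambda}}$. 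On the other hand $\overline{\det V}=\prod_k\psi_k$, so $\overline{\det V}|_{I_{\lambda}}=\theta_{1,\lambda}^{\sum_k b_{k,\lambda}}=\theta_{1,\lambda}^{ne_{\lambda}w/2}$. Since $\theta_{1,\lambda}$ has exact order $\ell-1$, we get $e_{\lambda}j\equiv ne_{\lambda}w/2\pmod{\ell-1}$; both sides are non-negative and bounded above by a quantity of size $O(dM)$, hence far below $\ell-1$ once $\ell>C_2'$, so in fact $e_{\lambda}j=ne_{\lambda}w/2$, whence $j=nw/2$. As $n$ and $w$ are odd, $nw/2\notin\mbb{Z}$, contradicting $j\in\mbb{Z}$; this finishes (e).

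The step I expect to be the main obstacle is precisely situation (e): unlike (a)--(d), the archimedean information on Frobenius eigenvalues supplied by (G-2) is automatically self-consistent and gives nothing, so one is forced to use that $\det V$ itself (not just its reduction) is a Galois representation with an honest integer Hodge--Tate weight, and then push the parity obstruction through reduction mod $\ell$ and the order-$(\ell-1)$ character $\theta_{1,\lambda}$. Everything else is routine, modulo the elementary verifications that $C_1'\le C_2'$ and that $C_2'$ dominates the bound on $e_{\lambda}j$ and $ne_{\lambda}w/2$ needed to upgrade the congruence mod $\ell-1$ to an equality.
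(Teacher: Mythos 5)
Your argument is correct, and for situations (a)--(d) it coincides with the paper's proof: feed Proposition \ref{Thm2} (situation (a) when $\ell\nmid d_K$, situation (b) otherwise, using $C_2'\ge C_1'$) into the integrality of tame inertia weights and Caruso's bound $0\le b_{k,\lambda}\le e_\lambda r$ from Theorem \ref{Ca}, then kill $e_\lambda w/2$ by parity (using $e_\lambda=1$ in (a), and $e_\lambda\mid e_\lambda f_\lambda=d$ with $d$ odd in (b), which is where the non-splitting hypothesis is used) or by size in (c), (d).

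Where you genuinely diverge is situation (e). The paper disposes of it in one line by citing Caruso--Savitt (\cite{CS}, Theorem 1): the sum of the tame inertia weights at $\lambda$ is divisible by $e_\lambda$, so $ne_\lambda w/2=e_\lambda m$ forces $nw/2\in\mbb{Z}$, impossible for $n,w$ odd. You instead re-derive exactly the determinant instance of that divisibility from scratch: $\det V|_{G_\lambda}$ is a one-dimensional semistable $\mbb{Q}_\ell$-valued character, hence crystalline ($N=0$ on a line, since $N\varphi=\ell\varphi N$), hence $\chi^j\otimes(\mathrm{unramified})$ with $j=\sum_i h_i\in\mbb{Z}\cap[0,nr]$, and then the congruence $e_\lambda j\equiv ne_\lambda w/2 \pmod{\ell-1}$ upgrades to an equality because both sides lie in $[0,dM]$ and $\ell-1>dM$ under $\ell>C_2'$. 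This is a valid and self-contained replacement; it buys independence from the tame-inertia-polygon machinery of \cite{CS} at the cost of invoking two standard but nontrivial facts that you should make explicit with a reference: (i) a crystalline representation with all Hodge--Tate weights $0$ is unramified, and (ii) the Hodge--Tate weight of $\det V$ is the sum of those of $V$. Note also that your claim ``one-dimensional crystalline $=$ cyclotomic power times unramified'' is special to $\mbb{Q}_\ell$-coefficients (it fails for Lubin--Tate characters with larger coefficient fields when $K_\lambda\neq\mbb{Q}_\ell$); it does apply here because $\det V$ is literally $\mbb{Q}_\ell$-valued, but this is worth a sentence. Your closing remark is accurate: the archimedean input from (G-2)$'$ is vacuous in case (e), and some genuinely $\ell$-adic input on the determinant (yours, or Caruso--Savitt's) is indispensable.
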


\begin{proof}[Proofs of Theorem \ref{Cor1} and \ref{Cor2}]
We only prove Theorem \ref{Cor1} because
we can prove Theorem \ref{Cor2} by the same way.
Suppose that 
there exists an $\ell$-adic Galois representation $V$ 
which is contained in 
$\mrm{Rep}_{\mbb{Q}_{\ell}}
(G_K)^{\bullet}_{\mrm{cycl}}$
and take its residual representation $\bar V$.
If we assume one of the situations (a) and (b) given in Proposition \ref{Thm1},
then $\bar V$ is of uniform tame inertia weight
$e_{\lambda}w/2$ at any place ${\lambda}$ of $K$ above $\ell$, 
and thus $e_{\lambda}w/2$ must be a rational integer.
Moreover, by Theorem \ref{Ca},
it follows that the tame inertia weight $e_{\lambda}w/2$ is
between $0$ and $e_{\lambda}r$.    
However, if we assume any one of the conditions
(a), (b), (c) and (d),
then $e_{\lambda}w$ is odd for some ${\lambda}$ or $e_{\lambda}w/2>e_{\lambda}r$.
This is a contradiction. 
The rest of the assertion related with (e) follows 
from the fact (\cite{CS}, Theorem 1) that
the sum of all the tame inertia weights of $V$ at ${\lambda}$
must be divisible by $e_{\lambda}$.
\end{proof}

\begin{remark}
To remove the special assumption ``$\ell$ does not split in $K$''
in Theorem \ref{Cor2}
is impossible in general because there exists such an example,
which is pointed out by Akio Tamagawa:
Let $E$ be an elliptic curves over $K$ 
with complex multiplication over $K$ by an imaginary quadratic field 
$F:=\mbb{Q}\otimes_{\mbb{Z}} \mrm{End}_K(E)\subset K$.
Then $E$ is potential everywhere good reduction and 
thus we may suppose 
$E$ has everywhere good reduction over $K$.  
Put $F_{\ell}:=\mbb{Q}_{\ell}\otimes_{\mbb{Q}} F$,
which is a semisimple $\mbb{Q}_{\ell}$-algebra.
It is well-known that 
$F_{\ell}$ acts faithfully on the Tate-module 
$V_{\ell}(E)$ of $E$ and thus 
$V_{\ell}(E)$ has a natural structure of 
$1$-dimensional $F_{\ell}$-vector space. 
If $\ell$ splits in $F$, the decomposition 
$F_{\ell}\simeq \mbb{Q}_{\ell}\times \mbb{Q}_{\ell}$
induces a decomposition of $V_{\ell}(E)$ as a sum of $1$-dimensional 
$G_K$-stable $\ell$-adic representations.
For such odd prime $\ell$, 
it is easy to check that $V_{\ell}(E)$ is an element of the set
$\mrm{Rep}_{\mbb{Q}_{\ell}}
(G_K)^{\bullet}$, 
where $\bullet=(2, 2,1,1)$. 
\end{remark}
 
\section{Applications}

We give some applications of our results.
We use same notation as in the previous section.

\subsection{Rasmussen-Tamagawa Conjecture}
As a first application, 
we show a special case of a Conjecture 
of Rasmussen and Tamagawa.
We denote by $\tilde{K_{\ell}}$ the maximal pro-$\ell$ extension 
of $K(\mu_{\ell})$ which is unramified away from $\ell$. 
\begin{definition}
Let $g\ge 0$ be an integer.
We denote by $\mcal{A}(K,g,\ell)$
the set of $K$-isomorphism classes of abelian varieties $A$ over $K$,
of dimension $g$, which satisfy the following equivalent 
conditions:

\noindent
(1) $K(A[\ell^{\infty}])\subset \tilde{K_{\ell}}$;

\noindent
(2) The abelian variety $A$ has good reduction outside $\ell$ and 
the extension $K(A[\ell])/K(\mu_{\ell})$ is an $\ell$-extension;

\noindent
(3) The abelian variety $A$ has good reduction outside $\ell$ and
$A[\ell]$ admits a filtration of $G_K$-modules
\[
\{0\}= \bar{V}_0\subset \bar{V}_1 \subset \dots \subset
\bar{V}_{2g-1}\subset \bar{V}_{2g}=A[\ell]
\]
such that $\bar{V}_k$ has dimension $k$ for each $1\le k\le 2g$. 
Furthermore, for each $1\le k\le 2g$, the $G_K$-action on the space 
$\bar{V}_k/\bar{V}_{k-1}$ is given by 
$g.\bar v=\chi_{\ell} (g)^{a_k}\cdot \bar v$ 
for some $a_k\in \mbb{Z}$.
\end{definition}

\noindent
The equivalently of the above three conditions 
follows from the criterion of N\'eron-Ogg-Shafarevich 
and Lemma \ref{RTlem} below
(put $G=\mrm{Gal}(K(A[\ell^{\infty}])/K)$,
$N=\mrm{Gal}(K(A[\ell^{\infty}])/K(\mu_{\ell}))$
and apply Lemma \ref{RTlem} to 
the group $A[\ell]$).  
The set $\mcal{A}(K,g,\ell)$ is a finite set because of Faltings' proof 
of Shafarevich Conjecture.  
Rasmussen and Tamagawa conjectured 
that for any $\ell$ large enough,
this set is empty: 
\begin{conjecture}[\cite{RT}, Conjecture 1]
\label{RTc}
The set $\mcal{A}(K,g):=\{(A,\ell)\mid 
[A]\in \mcal{A}(K,g,\ell),\ \ell:\mrm{prime\ number }\}$
is finite, that is, the set $\mcal{A}(K,g,\ell)$ is empty  
for any prime $\ell$ large enough. 
\end{conjecture}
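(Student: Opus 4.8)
The plan is to deduce Conjecture \ref{RTc} (at least a substantial case of it) from the ``cyclotomic'' non-existence statement, Theorem \ref{Cor1}, by applying it to the Tate modules of the abelian varieties in $\mcal{A}(K,g,\ell)$. The key point is that condition (3) in the definition of $\mcal{A}(K,g,\ell)$ is \emph{literally} the filtration condition (F-2): the $\ell$-torsion of such an $A$ carries a $G_K$-stable full flag whose successive quotients are powers of $\chi_{\ell}$. Hence, unlike for the \'etale cohomology of a general variety, the representations occurring here fall into the smaller set $\mrm{Rep}_{\mbb{Q}_{\ell}}(G_K)^{\bullet}_{\mrm{cycl}}$, so Theorem \ref{Cor1} applies and no ``$\ell$ does not split in $K$'' hypothesis (as would be needed for Theorem \ref{Cor2}) is required.

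Concretely, I would fix an auxiliary prime $\ell_0$ --- say the least prime different from $\ell$, so $\ell_0=2$ once $\ell$ is large --- and attach to $[A]\in\mcal{A}(K,g,\ell)$ the $2g$-dimensional $\ell$-adic representation $V:=H^1_{\mrm{\acute{e}t}}(A_{\bar K},\mbb{Q}_{\ell})^{\vee}=V_{\ell}(A)$, whose residual representation (with respect to the lattice $T_{\ell}(A)$) is $A[\ell]$. I then claim $V\in\mrm{Rep}_{\mbb{Q}_{\ell}}(G_K)^{\bullet}_{\mrm{cycl}}$ with $\bullet=(2g,\ell_0,1,1)$. Indeed (G-3) is clear, since $A$ has good reduction away from $\ell$, so $V$ is unramified at every finite place not above $\ell$; (F-2) is exactly condition (3), after reducing the exponents $a_k$ modulo $\ell-1$ so that $0\le a_k\le\ell-2$; and (G-2)$'$ with $w=1$ holds because $A$ has good reduction at the chosen place $\lambda_0$ above $\ell_0$, so $V$ is unramified at $\lambda_0$, $\mrm{det}(T-\mrm{Fr}_{\lambda_0}|V)$ is the characteristic polynomial of the Frobenius of the reduction of $A$ and hence has rational integer coefficients, and by the Riemann hypothesis for abelian varieties over finite fields all of its roots have complex absolute value $q_{\lambda_0}^{1/2}$ under every embedding $\bar{\mbb{Q}}_{\ell}\hookrightarrow\mbb{C}$, so $w_1(V)=\cdots=w_{2g}(V)=1$. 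The only hypothesis \emph{not} built into the definition of $\mcal{A}(K,g,\ell)$ is the semistability part of (G-1) at the places above $\ell$ (the Hodge--Tate weights are automatically in $[0,1]$, so $r=1$); granting that, one may even invoke the Example following (F-2) (with $X=A$, $w=r=1$) instead of checking the rest by hand.

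Granting that $V$ lies in $\mrm{Rep}_{\mbb{Q}_{\ell}}(G_K)^{\bullet}_{\mrm{cycl}}$, the conclusion is immediate: since $w=1$ is odd, Theorem \ref{Cor1}(a) gives $\mrm{Rep}_{\mbb{Q}_{\ell}}(G_K)^{\bullet}_{\mrm{cycl}}=\emptyset$ whenever $\ell\nmid d_K$ and $\ell>C_1(d,\bullet)$ (which for $\bullet=(2g,\ell_0,1,1)$ is $2\binom{2g}{g}\ell_0^{2dg}$, depending only on $K$ and $g$), and Theorem \ref{Cor1}(b) disposes in addition of the primes $\ell\mid d_K$ when $K/\mbb{Q}$ has odd degree. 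Hence $\mcal{A}(K,g,\ell)=\emptyset$ for all $\ell$ outside a finite set (the primes $\le C_1$, together with the finitely many $\ell\mid d_K$ exceeding $C_1$ in the case $[K:\mbb{Q}]$ even); for each of the finitely many remaining $\ell$ the set $\mcal{A}(K,g,\ell)$ is finite by Faltings' finiteness theorem; therefore $\mcal{A}(K,g)$ is finite, which is the assertion of Conjecture \ref{RTc}.

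The main obstacle is the step I flagged: establishing the semistability hypothesis (G-1) at the places above $\ell$ for the members of $\mcal{A}(K,g,\ell)$ when $\ell$ is large. The definition of $\mcal{A}(K,g,\ell)$ grants good reduction only \emph{away} from $\ell$, so one must show that the global condition $K(A[\ell^{\infty}])\subset\tilde{K_{\ell}}$ --- equivalently, that $K(A[\ell])/K(\mu_{\ell})$ is an $\ell$-extension --- forces $A$ to be semistable (indeed, to have good reduction) at every place above $\ell$ once $\ell$ is large. The mechanism I would exploit is that the inertia at such a place then acts on $A[\ell]$ through an upper-triangular group whose diagonal is a power of the mod $\ell$ cyclotomic character, hence tame of level one, so the tame inertia weights of $A[\ell]$ there are small; comparing this with what the ramification of $A[\ell]$ must be at a place of additive reduction --- via Grothendieck's semistable reduction theorem and the restrictions on finite flat group schemes over $\mcal{O}_{K_{\lambda}}$ that hold once $\ell$ is large --- should force the potential-semistability defect to vanish. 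If this cannot be pushed through in complete generality, one still obtains Conjecture \ref{RTc} for every $K$ under the additional hypothesis that the abelian varieties in question have potentially good (or semistable) reduction at $\ell$, and unconditionally when $K/\mbb{Q}$ has odd degree.
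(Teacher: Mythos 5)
The statement you are proving is labelled a \emph{conjecture} in the paper, and the paper does not prove it: it only establishes the everywhere-semistable case (Corollary \ref{RTst}, via Theorem \ref{Cor1} applied to $V_{\ell}(A)$ with $\bullet=(2g,2,1,1)$), plus the variant Corollary \ref{GRTst}. The part of your argument that works is exactly this semistable case, and there it matches the paper's route: condition (3) defining $\mcal{A}(K,g,\ell)$ gives (F-2) via Lemma \ref{RTlem}, good reduction outside $\ell$ gives (G-3) and, with $\ell_0=2$, gives (G-2)$'$ with $w=1$ by the Weil conjectures, and since $w=1$ is odd Theorem \ref{Cor1} applies with no splitting hypothesis. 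Your bookkeeping of the remaining primes ($\ell\le C_1$, $\ell\mid d_K$, Faltings for each fixed $\ell$) is also fine.

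The gap is the one you flagged, and it is not a technicality that can be ``pushed through'': verifying (G-1) at places $\lambda\mid\ell$ is the open heart of the conjecture. The definition of $\mcal{A}(K,g,\ell)$ gives no control on the reduction type of $A$ above $\ell$, and your proposed mechanism is circular: Caruso's Theorem \ref{Ca}, which bounds the tame inertia weights by $e_{\lambda}r$, takes semistability of $V|_{G_{\lambda}}$ as a \emph{hypothesis}, so it cannot be used to rule out additive reduction at $\lambda$; likewise Grothendieck's criterion and Raynaud-type constraints on finite flat group schemes over $\mcal{O}_{K_{\lambda}}$ presuppose the existence of the very models whose existence is in question. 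For a place of additive reduction above $\ell$ the representation $A[\ell]|_{I_{\lambda}}$ can be wildly ramified in a way compatible with $K(A[\ell])/K(\mu_{\ell})$ being an $\ell$-extension, so the upper-triangular global shape does not force tameness or small weights locally at $\ell$. Consequently your closing claims are overstated: the argument yields Conjecture \ref{RTc} only for the subfamily with everywhere semistable reduction (i.e.\ the finiteness/emptiness of $\mcal{A}(K,g,\ell)_{\mrm{st}}$, which is the paper's Corollary \ref{RTst}), and the ``unconditionally when $K/\mbb{Q}$ has odd degree'' assertion is unjustified, since the odd-degree hypothesis only removes the $\ell\mid d_K$ obstruction in Theorem \ref{Cor1}(b) and does nothing about semistability above $\ell$. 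You should present your result as a proof of Corollary \ref{RTst}, not of Conjecture \ref{RTc}.
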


\noindent
We call this conjecture the 
\textit{Rasmussen-Tamagawa conjecture}. 
It is known that the Rasmussen-Tamagawa conjecture 
holds under the following conditions:

\vspace{2mm}

\noindent 
(i) $K=\mbb{Q}$ and $g=1$ (\cite{RT}, Theorem 2);

\vspace{2mm} 

\noindent 
(ii) $K$ is a quadratic number field other than the 
imaginary quadratic fields of class number one 
and $g=1$ (\cite{RT}, Theorem 4). 

\vspace{2mm}

\noindent 
We consider the semistable reduction case of 
Conjecture \ref{RTc}.
\begin{definition}
(1) We denote by $\mcal{A}(K,g,\ell)_{\mrm{st}}$
the set of $K$-isomorphism classes of 
abelian varieties in  
$\mcal{A}(K,g,\ell)$
with everywhere semistable reduction.

\noindent
(2) We denote by  $\mcal{A}(K,g,\ell_0,\ell)_{\mrm{st}}$
the set of $K$-isomorphism classes of abelian varieties $A$ over $K$
with everywhere semistable reduction,
of dimension $g$, which satisfy the following condition:
The abelian variety $A$ has good reduction 
at some places of $K$ above $\ell_0$ 
and
$A[\ell]$ admits a filtration of $G_K$-modules
\[
\{0\}= \bar{V}_0\subset \bar{V}_1 \subset \dots \subset
\bar{V}_{2g-1}\subset \bar{V}_{2g}=A[\ell]
\]
such that $\bar{V}_k$ has dimension $k$ for each $1\le k\le 2g$. 
\end{definition}

\noindent
Clearly, 
we see $\mcal{A}(K,g,\ell)_{\mrm{st}}\subset 
\mcal{A}(K,g,\ell_0,\ell)_{\mrm{st}}$ since $\ell\not= \ell_0$.
The set $\mcal{A}(K,g,\ell)_{\mrm{st}}$ is finite, however,
the set $\mcal{A}(K,g,\ell_0,\ell)_{\mrm{st}}$ may be infinite.
The Rasmussen-Tamagawa conjecture implies that 
$\mcal{A}(K,g,\ell)_{\mrm{st}}$ will be empty 
for a prime $\ell$ large enough.
We will prove that $\mcal{A}(K,g,\ell_0,\ell)_{\mrm{st}}$
is in fact empty for a prime $\ell$ large enough
which does not split in $K$.
Recall the lemma proved by Rasmussen and Tamagawa
(cf.\ \cite{RT}, Lemma 3). 
Let $G$ be a topological group 
with a normal pro-$\ell$ open subgroup $N$, such that 
the quotient $\Delta=G/N$ is isomorphic to a subgroup of 
$\mbb{F}_{\ell}^{\times}$.
Because $N$ is pro-$\ell$,
we see that $N$ has trivial image under any character 
$\psi\colon G\to \mbb{F}_{\ell}^{\times}$.
Hence, there always exists an induced character   
$\bar \psi\colon \Delta\to \mbb{F}_{\ell}^{\times}$. 
Let $\chi\colon G \to \mbb{F}_{\ell}^{\times}$  
be a character such that the induced 
character $\bar \chi$ is an injection
$\Delta\hookrightarrow \mbb{F}_{\ell}^{\times}$.
Finally, let $\bar V$ be a finite dimensional 
$\mbb{F}_{\ell}$-vector space of dimension $n$ 
on which $G$ acts continuously.

\begin{lemma}
\label{RTlem}
The vector space $V$ admits a filtration of $G_K$-modules
\[
\{0\}= \bar{V}_0\subset \bar{V}_1 \subset \dots \subset
\bar{V}_{n-1}\subset \bar{V}_n=\bar V
\]
such that $\bar{V}_k$ has dimension $k$ for each $1\le k\le n$. 
Furthermore, for each $1\le k\le n$, the $G$-action on the space 
$\bar{V}_k/\bar{V}_{k-1}$ is given by 
$g.\bar v=\chi (g)^{a_k}\cdot \bar v$ 
for some $a_k\in \mbb{Z}$, 
$0\le a_k< \# \Delta$.
\end{lemma}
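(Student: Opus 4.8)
The plan is to argue by induction on $n=\dim_{\mbb{F}_{\ell}} \bar V$, producing a one-dimensional $G$-stable subspace on which $G$ acts through a power of $\chi$, and then passing to the quotient. First I would use the hypothesis that $N$ is pro-$\ell$ and acts continuously on the finite $\ell$-group underlying $\bar V$: since the order of $GL(\bar V)$ has $\ell$-part coming only from unipotent elements, a standard argument (the pro-$\ell$ group $N$ fixes a nonzero vector in any nonzero $\mbb{F}_{\ell}[N]$-module, because the fixed-point functor is nonzero on nonzero modules over a field of characteristic $\ell$ for a finite $\ell$-group, then pass to the pro-$\ell$ limit) shows $\bar V^{N}\neq\{0\}$. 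Thus the nonzero subspace $W:=\bar V^{N}$ is stable under $G$ (as $N$ is normal in $G$), and the $G$-action on $W$ factors through $\Delta=G/N$.

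Next I would diagonalize the $\Delta$-action on $W$. Because $\Delta$ is (isomorphic to) a subgroup of $\mbb{F}_{\ell}^{\times}$, it is cyclic of order prime to $\ell$, so $\mbb{F}_{\ell}[\Delta]$ is semisimple and $W$ decomposes as a direct sum of characters $\bar\psi\colon\Delta\to\mbb{F}_{\ell}^{\times}$. Pick any such $\bar\psi$ occurring in $W$ and choose a corresponding nonzero eigenvector; this spans a one-dimensional $G$-stable subspace $\bar V_1$. Since $\bar\chi\colon\Delta\hookrightarrow\mbb{F}_{\ell}^{\times}$ is injective and $\Delta$ is cyclic of order $\#\Delta$, the character group of $\Delta$ is cyclic generated by $\bar\chi$, so $\bar\psi=\bar\chi^{\,a_1}$ for some integer $0\le a_1<\#\Delta$. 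Lifting through $G\to\Delta$, the $G$-action on $\bar V_1$ is $g.\bar v=\chi(g)^{a_1}\bar v$, as required.

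Finally I would apply the inductive hypothesis to $\bar V/\bar V_1$, which is an $(n-1)$-dimensional continuous $\mbb{F}_{\ell}$-representation of $G$ with the same group-theoretic data $(G,N,\Delta,\chi)$, obtaining a full flag of $G$-submodules of $\bar V/\bar V_1$ with graded pieces powers of $\chi$; pulling this back to $\bar V$ and prepending $\bar V_0=\{0\}\subset\bar V_1$ yields the desired filtration. The base case $n=1$ is the previous paragraph with $W=\bar V$.

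The main obstacle — really the only nontrivial point — is the fixed-point assertion $\bar V^{N}\neq\{0\}$ for the pro-$\ell$ group $N$ acting on the finite $\mbb{F}_{\ell}$-space $\bar V$; everything else is formal. One handles it by factoring the continuous action through a finite $\ell$-group quotient $N\twoheadrightarrow \bar N$ (possible since $GL(\bar V)$ is finite), reducing to the classical statement that a finite $p$-group acting on a nonzero $\mbb{F}_{p}$-vector space has a nonzero fixed vector (e.g. by a counting argument on orbits, or by noting the augmentation ideal of $\mbb{F}_{\ell}[\bar N]$ is nilpotent).
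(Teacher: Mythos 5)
Your proof is correct and is essentially the argument the paper relies on: the paper omits the proof and simply defers to Lemma~3 of Rasmussen--Tamagawa, whose proof is exactly your induction via the nonzero fixed subspace $\bar V^{N}$ of the pro-$\ell$ normal subgroup, diagonalization of the prime-to-$\ell$ cyclic quotient $\Delta$, and the observation that the injective character $\bar\chi$ generates the character group of $\Delta$. No gaps; all the steps you flag (in particular $\bar V^{N}\neq\{0\}$ via factoring through a finite $\ell$-group quotient) are handled correctly.
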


\begin{proof}
The proof will proceed by the same method as the 
proof of Lemma 3 of \cite{RT}, thus we omit it.
\end{proof}

\noindent
Take an abelian variety $A$ which is in the set
$\mcal{A}(K,g,\ell)_{\mrm{st}}$
(resp.\ $\mcal{A}(K,g,\ell_0,\ell)_{\mrm{st}}$). 
Then $V_{\ell}(A)$  
is an element of the set 
$\mrm{Rep}_{\mbb{Q}_{\ell}}
(G_K)^{\bullet}_{\mrm{cycl}}$ 
(resp.\ $\mrm{Rep}_{\mbb{Q}_{\ell}}
(G_K)^{\bullet}$ )
with $\bullet=(2g,2,1,1)$ 
(resp.\ $\bullet=(2g,\ell_0,1,1)$)
for any $\ell>2$ (resp.\ $\ell>\ell_0$).
Consequently, we obtain the following results as 
corollaries of Theorem \ref{Cor1} and \ref{Cor2}:

\begin{corollary}
\label{RTst}
The set $\mcal{A}(K,g,\ell)_{\mrm{st}}$ is empty 
under any one of the following situations:

$\mrm{(a)}$ $\ell\nmid d_K$ and 
$\ell > 
2^{\delta_1}
\left(\begin{smallmatrix}
  2g \\ g
\end{smallmatrix}\right)$,
where $\delta_1:=2dg+1$;

$\mrm{(b)}$ 
The extension $K/\mbb{Q}$ has odd degree
and
$\ell > 
2^{\delta_2}
\left(\begin{smallmatrix}
  2g \\ g
\end{smallmatrix}\right)$,
where $\delta_2:=2d^2g+1$.
\end{corollary}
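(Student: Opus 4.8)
The plan is to reduce this corollary directly to Theorem \ref{Cor1} by identifying, for an abelian variety $A\in\mcal{A}(K,g,\ell)_{\mrm{st}}$, the Tate module $V_{\ell}(A)$ (or rather its dual, but these differ only by a twist, so the relevant invariants are unchanged) with an element of $\mrm{Rep}_{\mbb{Q}_{\ell}}(G_K)^{\bullet}_{\mrm{cycl}}$ for the parameter $\bullet=(2g,2,1,1)$. First I would observe that $\dim V_{\ell}(A)=2g=n$, that $A$ has everywhere semistable reduction, and that the Hodge--Tate weights of $V_{\ell}(A)$ lie in $[0,1]$, so $r=1$; this gives condition (G-1). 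Since $A$ has good reduction outside $\ell$, in particular it has good reduction at any place $\ell_0=2$ not equal to $\ell$ (after excluding the finitely many $\ell$ equal to $2$, which is harmless for an asymptotic statement), and by the Weil conjectures for abelian varieties the characteristic polynomial $\det(T-\mrm{Fr}_{\lambda_0}\mid V_{\ell}(A))$ has rational integer coefficients with all roots of complex absolute value $q_{\lambda_0}^{1/2}$; hence (G-2)$'$ holds with $w_1(V)=\cdots=w_n(V)=1=w$, so also $\bar w=nw=2g$. Finally, condition (F-2) is exactly condition (3) in the definition of $\mcal{A}(K,g,\ell)$ — or, via Lemma \ref{RTlem} applied to $G=\mrm{Gal}(K(A[\ell^{\infty}])/K)$ and $N=\mrm{Gal}(K(A[\ell^{\infty}])/K(\mu_{\ell}))$, it follows from condition (1) — giving a filtration whose graded pieces are powers of $\chi_{\ell}$. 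Thus $V_{\ell}(A)\in\mrm{Rep}_{\mbb{Q}_{\ell}}(G_K)^{\bullet}_{\mrm{cycl}}$ with $\bullet=(2g,2,1,1)$.

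Next I would simply invoke Theorem \ref{Cor1}: here $w=1$ is odd, so cases (a) and (b) of that theorem apply, and the conclusion is that $\mrm{Rep}_{\mbb{Q}_{\ell}}(G_K)^{\bullet}_{\mrm{cycl}}$ is empty once $\ell$ exceeds the relevant constant. It remains to unwind the constants $C_1,C_2$ for this particular $\bullet$. We have $r=1$, $\bar w=2g$, $n=2g$, so $M=\mrm{max}\{nr,\bar w/2\}=\mrm{max}\{2g,g\}=2g$, and $\ell_0=2$, and $c_n=c_{2g}=\binom{2g}{g}$. Then $\varepsilon_1=dM=2dg$ and $\varepsilon_2=d\varepsilon_1=2d^2g$, so $C_1=2c_n\ell_0^{\varepsilon_1}=2\binom{2g}{g}2^{2dg}=2^{2dg+1}\binom{2g}{g}$ and $C_2=2\binom{2g}{g}2^{2d^2g}=2^{2d^2g+1}\binom{2g}{g}$. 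These are precisely the bounds $2^{\delta_1}\binom{2g}{g}$ with $\delta_1=2dg+1$ and $2^{\delta_2}\binom{2g}{g}$ with $\delta_2=2d^2g+1$ in the statement. In case (a) of Theorem \ref{Cor1} one needs $\ell\nmid d_K$, matching (a) here; in case (b) one needs $K/\mbb{Q}$ of odd degree, matching (b) here.

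I do not anticipate a genuine obstacle here, since the corollary is essentially a dictionary translation; the only point requiring a small amount of care is making sure the definitional conditions in the three equivalent descriptions of $\mcal{A}(K,g,\ell)$ match up correctly with (G-1), (G-2)$'$ and (F-2), and in particular that good reduction outside $\ell$ genuinely supplies the unramified place above $\ell_0=2$ together with the integrality and the archimedean absolute value statement in (G-2). One should also note at the outset that we may freely restrict to primes $\ell$ larger than any fixed bound, so excluding $\ell=2=\ell_0$ costs nothing; and that passing between $V_{\ell}(A)$ and its dual only twists by the cyclotomic character, which does not affect membership in $\mrm{Rep}_{\mbb{Q}_{\ell}}(G_K)^{\bullet}_{\mrm{cycl}}$ with these parameters. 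With those remarks in place, the corollary follows immediately.
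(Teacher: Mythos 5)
Your proposal is correct and follows essentially the same route as the paper: the paper likewise observes that for $A\in\mcal{A}(K,g,\ell)_{\mrm{st}}$ the Tate module $V_{\ell}(A)$ lies in $\mrm{Rep}_{\mbb{Q}_{\ell}}(G_K)^{\bullet}_{\mrm{cycl}}$ with $\bullet=(2g,2,1,1)$ for $\ell>2$, and then invokes Theorem \ref{Cor1}, the stated bounds being exactly $C_1$ and $C_2$ with $M=2g$, $c_{2g}=\left(\begin{smallmatrix}2g\\ g\end{smallmatrix}\right)$ and $\ell_0=2$. Your computation of the constants and the matching of (G-1), (G-2)$'$, (F-2) with the defining properties of $\mcal{A}(K,g,\ell)_{\mrm{st}}$ is exactly what the paper does.
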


\begin{corollary}
\label{GRTst}
Suppose that $\ell$ does not split in $K$.
The set $\mcal{A}(K,g,\ell_0,\ell)_{\mrm{st}}$ is empty 
under any one of the following situations:

$\mrm{(a)}$ $\ell\nmid d_K$ and 
$\ell > 
2\ell_0^{\delta_1'}
\left(\begin{smallmatrix}
  2g \\ g
\end{smallmatrix}\right)$,
where $\delta_1':=2dgh_K^+$;

$\mrm{(b)}$ 
The extension $K/\mbb{Q}$ has odd degree
and
$\ell > 
2\ell_0^{\delta_2'}
\left(\begin{smallmatrix}
  2g \\ g
\end{smallmatrix}\right)$,
where $\delta_2':=2d^2gh_K^+$.

\end{corollary}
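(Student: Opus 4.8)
The plan is to deduce the statement directly from Theorem \ref{Cor2}, applied to the Tate module of an abelian variety in the set. Suppose $\mcal{A}(K,g,\ell_0,\ell)_{\mrm{st}}$ is non-empty and pick an $A$ representing a class in it. First I would put $V:=V_{\ell}(A)$, a $2g$-dimensional $\ell$-adic representation of $G_K$, with $\bar V:=A[\ell]=T_{\ell}(A)/\ell T_{\ell}(A)$ as a residual representation, and check that, for $\ell>\ell_0$, the representation $V$ lies in $\mrm{Rep}_{\mbb{Q}_{\ell}}(G_K)^{\bullet}$ with $\bullet=(2g,\ell_0,1,1)$ --- this is the assertion already recorded in the paragraph preceding the statement. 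Concretely: (G-1) holds because the everywhere semistable reduction of $A$ makes $V|_{G_{\lambda}}$ semistable with Hodge--Tate weights in $[0,1]$ at every ${\lambda}\mid\ell$; (G-2)$'$ holds because $A$ has good reduction at some place ${\lambda}_0\mid\ell_0$, so $\mrm{det}(T-\mrm{Fr}_{{\lambda}_0}\mid V)$ has rational integer coefficients and, by the Weil bounds, all of its roots have complex absolute value $q_{{\lambda}_0}^{1/2}$, i.e.\ $w_1(V)=\cdots=w_{2g}(V)=1$; (G-3) holds because semistable reduction at a place ${\lambda}\nmid\ell$ forces $I_{\lambda}$ to act unipotently on $T_{\ell}(A)$, hence on $\bar V$; and (F-1) is part of the definition of $\mcal{A}(K,g,\ell_0,\ell)_{\mrm{st}}$ (and, as noted in the paper, does not depend on the choice of $\bar V$).

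Next I would match the numerical constants of Theorem \ref{Cor2}. With $n=2g$, $r=1$ and $w=1$ one has $\bar w=nw=2g$, hence $M=\mrm{max}\{nr,\bar w/2\}=\mrm{max}\{2g,g\}=2g$ and $c_n=c_{2g}=\binom{2g}{g}$. Therefore $\varepsilon_1'=dh_K^+M=2dgh_K^+=\delta_1'$ and $\varepsilon_2'=d\varepsilon_1'=2d^2gh_K^+=\delta_2'$, so $C_1'(K,\bullet)=2\binom{2g}{g}\ell_0^{\delta_1'}$ and $C_2'(K,\bullet)=2\binom{2g}{g}\ell_0^{\delta_2'}$, which are precisely the bounds in situations (a) and (b). Also $w=1$ is odd, so the standing hypothesis ``$w$ is odd or $w>2r$'' of Theorem \ref{Cor2} holds; and $\ell>C_1'$ (or $\ell>C_2'$) forces $\ell>\ell_0$, so the restriction $\ell>\ell_0$ used above is automatic.

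Finally I would invoke Theorem \ref{Cor2}: since $\ell$ does not split in $K$ and, in situation (a), $\ell\nmid d_K$ with $\ell>C_1'$, or, in situation (b), $K/\mbb{Q}$ has odd degree with $\ell>C_2'$, the set $\mrm{Rep}_{\mbb{Q}_{\ell}}(G_K)^{\bullet}$ is empty. This contradicts $V=V_{\ell}(A)$ being an element of it, so $\mcal{A}(K,g,\ell_0,\ell)_{\mrm{st}}$ must be empty. The only step that is not bookkeeping is the verification of (G-1)--(G-3), i.e.\ the standard dictionary between good and semistable reduction of $A$ and the $p$-adic Hodge-theoretic and ramification behaviour of its Tate module; among these the implication ``semistable reduction of $A$ implies $V_{\ell}(A)$ semistable as an $\ell$-adic representation'' is the most substantial, but it is well known.
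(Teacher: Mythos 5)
Your proof is correct and follows essentially the same route as the paper: the paper's entire argument is the observation (in the paragraph preceding the corollary) that $V_{\ell}(A)\in\mrm{Rep}_{\mbb{Q}_{\ell}}(G_K)^{\bullet}$ with $\bullet=(2g,\ell_0,1,1)$, followed by an appeal to Theorem \ref{Cor2}. Your verification of (G-1)--(G-3) and your computation $M=2g$, $c_{2g}=\binom{2g}{g}$, $\varepsilon_i'=\delta_i'$ just makes explicit what the paper leaves implicit, and the constants match.
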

\vspace{2mm}

\begin{remark}
Rasmussen and Tamagawa have shown the 
finiteness of the set  $\mcal{A}(K,g)_{\mrm{st}}$
by using the result of \cite{Ra} instead of Theorem \ref{Ca}
(unpublished).
Our main results in this paper are motivated by their work. 
\end{remark}

\subsection{Irreducibility of $\ell$-torsion points of elliptic curves}

We consider the following classical question:

\begin{question} 
Does there exist a constant $c_K$, which depends only on $K$, 
such that for any semistable elliptic curve $E$ defined over $K$
without complex multiplication over $K$, 
the representation in its $\ell$-torsion points $E[\ell]$ 
is irreducible whenever $\ell>c_K?$ 
Furthermore, if the answer is positive, 
how can we evaluate such a constant $c_K?$ 
\end{question} 

\noindent 
By Mazur's results on a moduli of rational points of modular curve 
$X_0(N)$ (\cite{Ma}), it is known that 
$c_\mbb{Q}=7$.  
If $K$ is a quadratic field, 
then the existence of $c_K$ is known 
and moreover, 
if the class number of $K$ is $1$, then the 
explicit calculation of $c_K$ is given by 
Kraus \cite{Kr1}. 
By combining results on Merel (\cite{Me}) and 
Momose (\cite{Mo}), 
Kraus showed 
the existence of $c_K$  
for a number field $K$ which does not contain 
an imaginary quadratic field of class number $1$ (\cite{Kr2}).  
Moreover, Kraus defined the good condition ``(C)'' 
associated with $K$ in {\it op.\ cit,} such that
the existence and the explicit value of $c_K$ is known
if $K$ satisfies this condition.   

The following is easy consequence of 
Corollary \ref{GRTst} under the case $g=1$.

\begin{corollary}
\label{ell}
Let $E$ be an elliptic curve over $K$ with 
everywhere semistable reduction.
Let $\ell_E$ be the minimal prime number $p$ such that 
$E$ has good reduction at some finite places 
of $K$ above $p$. 
Suppose $\ell$ does not split in $K$.
Then 
$E[\ell]$
is irreducible 
under any one of the following conditions:

$\mrm{(a)}$ $\ell\nmid d_K$ and 
$\ell> 4\ell_E^{\delta_1''}$, where
$\delta_1'':=2dh_K^+$;

$\mrm{(b)}$ 
The extension $K/\mbb{Q}$ has odd degree
and
$\ell > 
4\ell_E^{\delta_2''}$,
where $\delta_2'':=2d^2h_K^+$.
\end{corollary}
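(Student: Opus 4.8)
The plan is to deduce Corollary \ref{ell} directly from Corollary \ref{GRTst} in the case $g=1$, together with the classical fact that reducibility of $E[\ell]$ is equivalent to the existence of a $G_K$-stable line. First I would observe that if $E[\ell]$ is \emph{not} irreducible, then $\bar V:=E[\ell]$ admits a $1$-dimensional $G_K$-submodule $\bar V_1$, and hence the filtration $\{0\}=\bar V_0\subset \bar V_1\subset \bar V_2=E[\ell]$ with $\dim \bar V_k=k$. Since $E$ has everywhere semistable reduction by hypothesis, $V_\ell(E)$ is semistable at all finite places and satisfies the geometric conditions; moreover $E$ has good reduction at some place of $K$ above $\ell_E$ by definition of $\ell_E$. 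Setting $\ell_0:=\ell_E$ and $\bullet=(2,\ell_E,1,1)$, one checks exactly as in the paragraph preceding Corollary \ref{RTst} that $V_\ell(E)$ is then an element of $\mrm{Rep}_{\mbb{Q}_\ell}(G_K)^{\bullet}$: the conditions (G-1), (G-2)$'$, (G-3) hold because $V_\ell(E)$ comes from an abelian variety with semistable reduction and good reduction at $\ell_E$ (so the Frobenius eigenvalues at a place above $\ell_E$ are Weil numbers of weight $1$, giving $w_1=w_2=1$), and (F-1) is precisely the filtration just produced from reducibility.

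Next I would invoke Corollary \ref{GRTst} with $g=1$. For $g=1$ we have $2g=2$, $\left(\begin{smallmatrix}2g\\ g\end{smallmatrix}\right)=\left(\begin{smallmatrix}2\\ 1\end{smallmatrix}\right)=2$, so the bound in Corollary \ref{GRTst}(a) reads $\ell>2\ell_0^{\delta_1'}\cdot 2=4\ell_E^{2dh_K^+}=4\ell_E^{\delta_1''}$, matching condition (a) of Corollary \ref{ell}; and the bound in (b) reads $\ell>2\ell_0^{\delta_2'}\cdot 2=4\ell_E^{2d^2h_K^+}=4\ell_E^{\delta_2''}$, matching condition (b). Under the respective hypothesis $\ell\nmid d_K$ (resp.\ $K/\mbb{Q}$ of odd degree) together with $\ell$ not splitting in $K$, Corollary \ref{GRTst} asserts that the set $\mcal{A}(K,1,\ell_E,\ell)_{\mrm{st}}$ is empty. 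But $E$, being an abelian variety of dimension $1$ with everywhere semistable reduction, good reduction at a place above $\ell_E$, and $E[\ell]$ carrying the filtration above, would be a member of that set --- a contradiction. Hence $E[\ell]$ must be irreducible.

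The step requiring the most care is verifying that $E$ genuinely lands in $\mcal{A}(K,1,\ell_E,\ell)_{\mrm{st}}$, i.e.\ that all the defining conditions of that set are met; in particular one should note that no CM hypothesis is needed here (unlike in the related Remark after Theorem \ref{Cor2}), because the definition of $\mcal{A}(K,g,\ell_0,\ell)_{\mrm{st}}$ only demands the existence of the filtration on $A[\ell]$, which is exactly what reducibility of $E[\ell]$ supplies. One should also take a moment to confirm the numerology: that the two exponents $\delta_1''=2dh_K^+$ and $\delta_2''=2d^2h_K^+$ are obtained from $\delta_1'=2dgh_K^+$ and $\delta_2'=2d^2gh_K^+$ of Corollary \ref{GRTst} simply by setting $g=1$, and that the factor $4$ absorbs the binomial coefficient $\left(\begin{smallmatrix}2\\1\end{smallmatrix}\right)=2$ multiplied by the constant $2$. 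No further input is needed, so the proof is short: assemble the hypotheses, quote Corollary \ref{GRTst}, and read off the contradiction.
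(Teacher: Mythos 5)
Your proposal is correct and is exactly the argument the paper intends: the paper states the corollary as an "easy consequence of Corollary \ref{GRTst} under the case $g=1$," and your contrapositive argument (reducibility of $E[\ell]$ gives the filtration, hence membership in $\mcal{A}(K,1,\ell_E,\ell)_{\mrm{st}}$, contradicting its emptiness) together with the numerical check $2\cdot\left(\begin{smallmatrix}2\\1\end{smallmatrix}\right)=4$ and $\delta_i'\big|_{g=1}=\delta_i''$ is precisely that deduction.
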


We remark that
the above corollary is valid 
even if $E$ has complex multiplication over $K$.

\subsection{Residual properties of \'etale cohomology groups}

For any semistable elliptic curve $E$
over $\mbb{Q}$, Serre proved the following
(\cite{Se}, Section 5.4, Proposition 21, Corollary 1):
Let $\ell_E$ be the minimal prime number $p$ such that 
$E$ has good reduction at $p$.    
Then $E[\ell]$ is irreducible 
if $\ell>(1+\ell_E^{1/2})^2$.

As a corollary of Theorem \ref{Cor2}, we can slightly generalize this fact 
to \'etale cohomology groups of odd degree. 

\begin{corollary}
\label{Et}
Let $X$ be a proper smooth scheme over $K$
with everywhere semistable reduction 
and $w$ an odd integer.
Let $b_w(X)$ be a $w$-th Betti number of $X$ and 
$\ell_X$ the minimal prime number $p$ such that 
$X$ has good reduction at some places of $K$ above $p$.    
Then there exists a constant $C$ depending only on 
$b_w(X)$ and $\ell_X$ such that 
for any prime number $\ell>C$ which does not split in $K$,
the \'etale cohomology group
$H^w_{\mrm{\acute{e}t}}(X_{\bar K}, \mbb{Q}_{\ell})$
is not of residually Borel.
More precisely, if $\ell$ does not split in $K$,
$H^w_{\mrm{\acute{e}t}}(X_{\bar K}, \mbb{Q}_{\ell})$
is not of residually Borel
under any one of the following conditions:

$\mrm{(a)}$ $\ell\nmid d_K$ and 
$\ell> 2B_{b_w(X)}\ell_X^{\Delta_1}$, where
$\Delta_1:=b_w(X)dh_K^+w$;

$\mrm{(b)}$ 
The extension $K/\mbb{Q}$ has odd degree
and
$\ell > 
2B_{b_w(X)}\ell_X^{\Delta_2}$,
where $\Delta_2:=b_w(X)d^2h_K^+w$.

\end{corollary}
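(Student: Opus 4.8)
The plan is to deduce Corollary \ref{Et} from Theorem \ref{Cor2} applied to the $\ell$-adic representation $V:=H^w_{\mrm{\acute{e}t}}(X_{\bar K},\mbb{Q}_{\ell})^{\vee}$ of $G_K$, for the quadruple $\bullet=(b_w(X),\ell_X,w,w)$; thus $n=b_w(X)$, $\ell_0=\ell_X$, $r=w$, and the uniform weight is $w$, which is odd by hypothesis. First I would note that $b_w(X)=\dim_{\mbb{Q}_{\ell}}H^w_{\mrm{\acute{e}t}}(X_{\bar K},\mbb{Q}_{\ell})$ does not depend on $\ell$ (comparison with singular cohomology after fixing an embedding $\bar K\hookrightarrow\mbb{C}$, together with smooth proper base change); this is what lets the resulting constant depend only on $b_w(X)$, $\ell_X$ (and the fixed field $K$). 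Since $X$ has everywhere semistable reduction and, by the definition of $\ell_X$, good reduction at some place of $K$ above $\ell_X$, the example in Section 2.1 --- legitimate here since $w\le r$ --- shows that $V$ satisfies $\mrm{(G}$-$\mrm{1)}$, $\mrm{(G}$-$\mrm{2)}'$ and $\mrm{(G}$-$\mrm{3)}$ (this uses $C_{\mrm{st}}$ at the places above $\ell$ and the Weil conjectures at the place above $\ell_X$, the dual being taken precisely so that the Frobenius weights are non-negative and the Hodge--Tate weights lie in $[0,w]$). Consequently $V\in\mrm{Rep}_{\mbb{Q}_{\ell}}(G_K)^{\bullet}$ if and only if $V$ additionally satisfies $\mrm{(F}$-$\mrm{1)}$, i.e.\ if and only if $V$ is of residually Borel.

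Next I would observe that $V$ is of residually Borel exactly when $H^w_{\mrm{\acute{e}t}}(X_{\bar K},\mbb{Q}_{\ell})$ is: dualising a full flag of $G_K$-submodules of a residual representation $\bar T$ yields a full flag of $\bar T^{\vee}$, and for a $G_K$-stable $\mbb{Z}_{\ell}$-lattice $T$ one has $T^{\vee}/\ell T^{\vee}\cong(T/\ell T)^{\vee}$; so condition $\mrm{(F}$-$\mrm{1)}$ passes between a representation and its dual, and (as already remarked in Section 2.1) it is independent of the chosen lattice. Arguing by contradiction, suppose $H^w_{\mrm{\acute{e}t}}(X_{\bar K},\mbb{Q}_{\ell})$ is of residually Borel; then so is $V$, whence $V\in\mrm{Rep}_{\mbb{Q}_{\ell}}(G_K)^{\bullet}$. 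Because $w$ is odd, Theorem \ref{Cor2}\,(a) (in case $\ell\nmid d_K$), respectively Theorem \ref{Cor2}\,(b) (in case $K/\mbb{Q}$ has odd degree), asserts that $\mrm{Rep}_{\mbb{Q}_{\ell}}(G_K)^{\bullet}$ is empty whenever $\ell$ does not split in $K$ and $\ell>C_1'$, respectively $\ell>C_2'$. This contradiction shows that $H^w_{\mrm{\acute{e}t}}(X_{\bar K},\mbb{Q}_{\ell})$ is not of residually Borel under the stated hypotheses.

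It then only remains to identify the constants. For $\bullet=(b_w(X),\ell_X,w,w)$ one has $\bar w=nw=b_w(X)w$, hence $M=\max\{nr,\bar w/2\}=b_w(X)w$ and $c_n=B_{b_w(X)}$ (in the notation of the statement), so $\varepsilon_1'=dh_K^{+}M=b_w(X)dh_K^{+}w=\Delta_1$ and $\varepsilon_2'=d\varepsilon_1'=b_w(X)d^2h_K^{+}w=\Delta_2$. Therefore $C_1'=2B_{b_w(X)}\ell_X^{\Delta_1}$ and $C_2'=2B_{b_w(X)}\ell_X^{\Delta_2}$, which are exactly the bounds in (a) and (b); taking $C:=C_2'$ (which dominates $C_1'$) gives the first, qualitative assertion. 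I do not expect any genuine obstacle here: all the substance sits inside Theorem \ref{Cor2} and the example of Section 2.1. The only points requiring care are the normalisation bookkeeping (it is the \emph{dual} of $H^w_{\mrm{\acute{e}t}}$ that belongs to $\mrm{Rep}_{\mbb{Q}_{\ell}}(G_K)^{\bullet}$), the self-duality of $\mrm{(F}$-$\mrm{1)}$, and the fact that $b_w(X)$ is independent of $\ell$ so that $C$ can be chosen uniformly in $\ell$.
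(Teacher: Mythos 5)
Your proposal is correct and follows exactly the paper's route: the paper's own proof is just the observation that for $\bullet=(b_w(X),\ell_X,w,w)$ the dual of $H^w_{\mrm{\acute{e}t}}(X_{\bar K},\mbb{Q}_{\ell})$ lies in $\mrm{Rep}_{\mbb{Q}_{\ell}}(G_K)^{\bullet}$ (via the Example of Section 2.1) followed by an appeal to Theorem \ref{Cor2}. You merely make explicit the details the paper leaves implicit (self-duality of (F-1), $\ell$-independence of $b_w(X)$, and the computation $M=b_w(X)w$ giving $C_1'=2B_{b_w(X)}\ell_X^{\Delta_1}$ and $C_2'=2B_{b_w(X)}\ell_X^{\Delta_2}$), all of which check out.
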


\begin{proof}
Putting
$\bullet:=(b_w(X),\ell_X,w,w)$,
we see that the dual of 
$H^w_{\mrm{\acute{e}t}}(X_{\bar K}, \mbb{Q}_{\ell})$
is contained in the set
$\mrm{Rep}_{\mbb{Q}_{\ell}}
(G_K)^{\bullet}$.
Applying Theorem \ref{Cor2},   
we obtain the desired result.
\end{proof}

For any proper smooth scheme $X$ over $K$,
there exists an finite extension $L$ over $K$ such that 
$H^w_{\mrm{\acute{e}t}}(X_{\bar L}, \mbb{Q}_{\ell})$ 
is everywhere semistable as a representation of $G_L$
for almost all $\ell$
by  Proposition \ref{var}.
If this is the case, we see that 
$H^w_{\mrm{\acute{e}t}}(X_{\bar L}, \mbb{Q}_{\ell})^{\vee}$
satisfies $\mrm{(G}$-$\mrm{1)}$,  $\mrm{(G}$-$\mrm{2)}$ and 
$\mrm{(G}$-$\mrm{3)}$ as a representation of $G_L$.
Thus if we can obtain the explicit description 
of $L$, 
we will able to obtain the analogous result of
corollary  \ref{Et} for a prime $\ell$ large enough 
which does not split in $L$.
However, it is very difficult to determine 
such $L$ in general.
We can determine this $L$ if $X$ is an abelian variety.
If this is the case, 
Raynaud's criterion of semistable reduction 
(\cite{Gr}, Proposition 4.7)
implies that $X$ is everywhere semistable reduction over
$L:=K(X[3],X[5])$.

\end{document}